\newtheorem{theorem}{Theorem}
\newtheorem{case}[theorem]{Case}
\newtheorem{corollary}[theorem]{Corollary}
\newtheorem{definition}[theorem]{Definition}
\newtheorem{example}[theorem]{Example}
\newtheorem{remark}[theorem]{Remark}
\newenvironment{proof}[1][Proof]{\noindent\textbf{#1.} }{\ \rule{0.5em}{0.5em}}
\begin{document}

\begin{center}
\textbf{GENERALIZATION OF SPECIAL FUNCTIONS AND ITS APPLICATIONS TO }

\textbf{MULTIPLICATIVE AND ORDINARY FRACTIONAL DERIVATIVES}

\bigskip

\textbf{Ali \"{O}zyap{\i }c{\i }\footnote{%
\textquotedblleft The author(s) declare(s) that there is no conflict of
interest regarding the publication of this paper.\textquotedblright\ }}

Faculty of Engineering, Cyprus International University, Nicosia, Mersin
10,Turkey (aozyapici@ciu.edu.tr)

\textbf{Yusuf Gurefe}

Department of Econometrics , Usak University, Usak, Turkey
(yusuf.gurefe@usak.edu.tr)

\textbf{Emine M\i s\i rl\i }

Department of Mathematics, Ege University, Izmir, Turkey
(emine.misirli@ege.edu.tr)
\end{center}

%{\bf JEL Classification:} C02, C20\\[0.1cm]
%2010 {\it Mathematics Subject Classification:} 34A05.\\[0.1cm]
\textit{Key words:} Extended hypergeometric function, Confluent
hypergeometric function, extended modified Bessel function, Beta function,
Fractional derivative, Rational Functions.

\bigskip %{\bf JEL Classification:} C02, C20\\[0.1cm]
%2010 {\it Mathematics Subject Classification:} 34A05.\\[0.1cm]
\textit{Mathematics Subject Classification (2000): }33B15, 33B99, 33C15.

\textit{Corresponding author:} ali.ozyapici@emu.edu.tr

\begin{center}
%%%%%%%%%%%%%%%%%%%%%%%%%%%%%%%%%%%%%%%%%%%%%%%%%%%%%%%%%%%%%%%%%%%%%%%%%%%%%
%%%%%%%%%%%%%%%%%%%%%%%%%%%%%%%%%%%%%%%%%%%%%%%%%%%%%%%%%%%%%%%%%%%%%%%%%%%%%
\textbf{Abstract}
\end{center}

The goal of this paper is to extend the classical and multiplicative
fractional derivatives. For this purpose, it is introduced the new extended
modified Bessel function and also given an important relation between this
new function $I_{\upsilon }(q;x)$ and the confluent hypergeometric function $%
{_{1}F_{1}}(\alpha ,\beta ,x)$. Besides, it is used to generalize the
hypergeometric, the confluent hypergeometric and the extended beta functions
by using the new extended modified Bessel function. Also, the asymptotic
formulae and the generating function of the extended modified Bessel
function are obtained. The extensions of classical and multiplicative
fractional derivatives are defined via extended modified Bessel function
and, first time the fractional derivative of rational functions is
explicitly given via complex partial fraction decomposition.

\section{Introduction}

\subsection{\protect\bigskip Generalized Special Functions}

Especially, in the last two decades, several generalizations of the
well-known special functions have been studied by different authors. In
1997, Chaudhry \cite{CQRZ} have introduced the extension of Euler's beta
function by 
\begin{align}
B_{p}(x,y)& =\int_{0}^{1}t^{x-1}(1-t)^{y-1}e^{-\frac{p}{t(1-t)}}dt.
\label{B1} \\
(Re(p)& >0,Re(x)>0,Re(y)>0)  \notag
\end{align}%
It is clear that the special case $p=0$ gives the Euler's beta function $%
B_{0}(x,y)=B(x,y)$.

Then, the authors in (\cite{OMA}) extended beta functions and hypergeometric
functions as%
\begin{align}
B_{p}^{\left( \alpha ,\beta \right) }\left( x,y\right) &
=\int_{0}^{1}t^{x-1}(1-t)^{y-1}{}_{1}F_{1}(\alpha ;\beta ;\frac{-p}{t(1-t)}%
)dt  \label{B2} \\
(Re(p)& >0,Re(x)>0,Re(y)>0,Re(\alpha )>0,Re(\beta )>0).  \notag
\end{align}

Lee et al. in (\cite{LRPK}) introduced the more generalized Beta type
function as follows:

\begin{align}
B_{p}^{(\alpha ,\beta ;m)}(x,y)&
=\int_{0}^{1}t^{x-1}(1-t)^{y-1}{}_{1}F_{1}(\alpha ;\beta ;\frac{-p}{%
t^{m}(1-t)^{m}})dt.  \label{B5} \\
(Re(p)& >0,Re(x)>0,Re(y)>0,Re(\alpha )>0,Re(\beta )>0)  \notag
\end{align}%
Consequently, Luo et. al. in (\cite{LMA}) \ generalized extended beta
function (\ref{B5}) (as well as (\ref{B1}) and (\ref{B2})) by introducing 
\begin{align}
B_{b;\rho ;\lambda }^{\left( \alpha ,\beta \right) }(x,y)&
=\int_{0}^{1}t^{x-1}(1-t)^{y-1}{}_{1}F_{1}(\alpha ;\beta ;\frac{-b}{t^{\rho
}\left( 1-t\right) ^{\lambda }})dt.  \label{B4} \\
(\rho & \geq 0,\lambda \geq 0,\min \{\func{Re}(\alpha ),\func{Re}(\beta
)\}>0,\func{Re}(x)>-\func{Re}(\rho \alpha ),\func{Re}(y)>-\func{Re}(\lambda
\alpha ))  \notag
\end{align}

\bigskip Recently, \ Parmar in (\cite{P}) introduced very interesting
special function consisting Bessel function of second kind as 
\begin{align}
B_{v}(x,y;p)& =\sqrt{\frac{2p}{\pi }}\int_{0}^{1}t^{x-\frac{3}{2}}(1-t)^{y-%
\frac{3}{2}}{}K_{v+\frac{1}{2}}(\frac{p}{t\left( 1-t\right) })dt,  \label{P1}
\\
(\func{Re}(p)& >0).  \notag
\end{align}%
Finally, we refer the papers (\cite{NVG}) and (\cite{O}) for more properties
of extended Gauss hypergeometric and extended confluent hypergeometric
functions.

In this paper, we introduce extended special functions as generalizations of
modified Bessel Functions, Beta functions, hypergeometric functions and
confluent hypergeometric functions. We would like to mention an interesting
remark from Qadir \cite{Q} who explains the importance of generalization of
the special functions as "Notice that the generalization of the other
special functions has proved even more useful than the separate special
functions themselves". We refer the paper \cite{Q} for more details about
generalization of the special functions.

Consequently, we define extended fractional derivative and extended
multiplicative fractional derivative.

\subsection{Multiplicative (geometric) Calculus and Fractional Derivatives}

\bigskip Multiplicative calculus has improved rapidly over the past 10
years. In this period, superiority of the multiplicative calculus over
ordinary calculus was proved by many studies. The most significant among
these studies are \cite{FA} in Biomedical Image Analysis, \cite{BR} in
complex analysis, \cite{FP} in growth phenomena, \cite{MG}, \cite{ROM}, \cite%
{A} and \cite{AAB} in numerical analysis, \cite{BMYO} in actuarial science,
finance, demography etc., \cite{ECL} in biology, recently \cite{HAI} in
accounting. In order for multiplicative calculus to be used efficiently in
all respects more studies needs to be done in various field. Recently,
multiplicative Laplace transform%
\begin{align}
L_{m}\left\{ f\left( t\right) \right\} & =\exp \left( \int_{0}^{\infty
}e^{-st}\ln \left( f\left( t\right) \right) dt\right) \\
(f\left( t\right) & \in \left[ 0,\infty \right) ).  \notag
\end{align}

has been introduced and \ applied in optics in \cite{YCG}.

\bigskip First application of the multiplicative calculus to fractional
derivative is executed by Abdeljawad and Grossman in \cite{AG}. In this
paper, the Riemann-Lioville fractional integral of order $\alpha \in 
%TCIMACRO{\U{2102} }%
%BeginExpansion
\mathbb{C}
%EndExpansion
$ has been defined as%
\begin{equation}
\left( aI_{\ast }^{\alpha }f\right) \left( x\right) =e^{\frac{1}{\Gamma
\left( \alpha \right) }\int_{a}^{x}\left( x-t\right) ^{\alpha -1}(\ln \circ
f)(x)\,dx},x>a.  \label{R-L}
\end{equation}

\bigskip In this article multiplicative fractional derivative (\ref{R-L}) is
extended. The connection between multiplicative and ordinary fractional
derivatives is presented with assertion of some essential characteristics of
fractional derivative. As a result, the generalized ordinary fractional
derivative introduced in section 3 is applied to introduce multiplicative
generalized fractional derivative.

\bigskip

\section{Extension of Special Functions}

In this section, we introduce special functions which will be generalization
of the functions (\ref{B1})-(\ref{P1}).

\subsection{Extended Modified Bessel Function}

We here introduce new extended of modified Bessel functions as follows.

\begin{definition}
The function 
\begin{align}
I_{v}(q;x)& =\frac{\left( \frac{x}{2}\right) ^{v}}{\sqrt{\pi }\Gamma \left(
v+\frac{1}{2}\right) }\int_{-1}^{1}(1-t^{2})^{v-\frac{1}{2}}{}(1-t)^{q-\frac{%
1}{2}}\exp \left( -x\left( t-1\right) \right) dt,  \label{EB} \\
(\func{Re}(v+q)& >0,\func{Re}(v)>\frac{-1}{2})  \notag
\end{align}%
is called extended modified Bessel function whenever integral exists.
\end{definition}

It is clear that the function (\ref{EB}) reduce to Bessel function when $q=%
\frac{1}{2}.$ Explicitly, $I_{v}(\frac{1}{2};x)=\exp (x)I_{v}(x).$

\begin{corollary}
We have the following integral representation for $I_{\upsilon }(q;x)$: 
\begin{eqnarray}
I_{\upsilon }(q;x) &=&\frac{\left( \frac{x}{2}\right) ^{\upsilon
}2^{2\upsilon +q-\frac{1}{2}}}{\sqrt{\pi }\ \Gamma \left( \upsilon +\frac{1}{%
2}\right) }\int_{0}^{1}t^{\upsilon +q-1}\left( 1-t\right) ^{\upsilon -\frac{1%
}{2}}\exp (2xt)\ \emph{d}t.  \label{eq.7} \\
(\func{Re}(v+q) &>&0,\func{Re}(v)>\frac{-1}{2})
\end{eqnarray}

\begin{proof}
By using the transformation $t\rightarrow 1-2t,$ the statement can be obtain.
\end{proof}
\end{corollary}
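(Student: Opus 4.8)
The plan is to start from the original definition of the extended modified Bessel function in~(\ref{EB}) and apply the indicated substitution to convert the symmetric integral over $[-1,1]$ into the integral over $[0,1]$ appearing in~(\ref{eq.7}). Setting $t = 1-2s$ (equivalently $s = \tfrac{1-t}{2}$), so that as $t$ runs from $-1$ to $1$ the new variable $s$ runs from $1$ down to $0$, I would have $\mathrm{d}t = -2\,\mathrm{d}s$; the reversal of orientation cancels the minus sign and yields an integral of $s$ from $0$ to $1$. The main bookkeeping is then to rewrite each factor of the integrand in terms of $s$.

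Carrying this out, I would compute the three substituted factors separately. First, $1-t = 2s$, so $(1-t)^{q-\frac{1}{2}} = 2^{q-\frac{1}{2}} s^{q-\frac{1}{2}}$. Next, $t-1 = -2s$, hence $\exp(-x(t-1)) = \exp(2xs)$, which produces the desired exponential factor. The remaining factor $(1-t^2)^{v-\frac{1}{2}} = \bigl((1-t)(1+t)\bigr)^{v-\frac{1}{2}}$ requires both $1-t = 2s$ and $1+t = 2(1-s)$, giving $(1-t^2)^{v-\frac{1}{2}} = \bigl(4s(1-s)\bigr)^{v-\frac{1}{2}} = 4^{v-\frac{1}{2}} s^{v-\frac{1}{2}}(1-s)^{v-\frac{1}{2}}$. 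Combining the powers of $s$, I would get $s^{v-\frac{1}{2}} \cdot s^{q-\frac{1}{2}} = s^{v+q-1}$, matching the exponent $t^{\upsilon+q-1}$ in~(\ref{eq.7}) after renaming $s$ back to $t$.

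Finally, I would collect all the numerical constants pulled outside the integral: the Jacobian contributes a factor $2$, the factor $(1-t)^{q-\frac{1}{2}}$ contributes $2^{q-\frac{1}{2}}$, and the factor $(1-t^2)^{v-\frac{1}{2}}$ contributes $4^{v-\frac{1}{2}} = 2^{2v-1}$. Multiplying these gives $2 \cdot 2^{q-\frac{1}{2}} \cdot 2^{2v-1} = 2^{2v+q-\frac{1}{2}}$, which is exactly the prefactor $2^{2\upsilon+q-\frac{1}{2}}$ appearing in~(\ref{eq.7}); the surviving $(x/2)^v$ and $\frac{1}{\sqrt{\pi}\,\Gamma(v+\frac{1}{2})}$ prefactors are untouched by the substitution. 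I do not expect a genuine obstacle here, since this is a direct change of variables; the only point demanding care is correctly tracking the exponents of $2$ and confirming that the orientation reversal absorbs the sign of the Jacobian, so that no spurious minus sign survives in the final formula.
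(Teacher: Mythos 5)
Your proposal is correct and is exactly the paper's own argument: the paper's proof consists solely of the remark that the substitution $t\rightarrow 1-2t$ yields the result, and you have carried out precisely that change of variables, with the bookkeeping of the powers of $2$ (namely $2\cdot 2^{q-\frac{1}{2}}\cdot 2^{2v-1}=2^{2v+q-\frac{1}{2}}$) and of the exponents of $s$ done correctly. Nothing further is needed.
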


\begin{theorem}
The extended modified Bessel function $I_{\upsilon }(q;x)$ has power series
representation as follows: 
\begin{equation}
I_{\upsilon }(q;x)=\frac{\left( \frac{x}{2}\right) ^{\upsilon }}{2^{q+\frac{1%
}{2}}}\sum_{n=0}^{\infty }\frac{\Gamma (2\upsilon +2q+2n)}{\Gamma (\upsilon
+q+n+\frac{1}{2})\ \Gamma (2\upsilon +q+n+\frac{1}{2})\ n!}\left( \frac{x}{2}%
\right) ^{n}.  \label{eq.8}
\end{equation}

\begin{proof}
From the representation (\ref{eq.7}), we can write the following relation
consisting of the power series of the function $\exp (2xt)$ 
\begin{equation}
\begin{split}
\int_{0}^{1}t^{\upsilon +q-1}\left( 1-t\right) ^{\upsilon -\frac{1}{2}}\exp
(2xt)\ \emph{d}t& =\sum_{n=0}^{\infty }\frac{(2x)^{n}}{n!}%
\int_{0}^{1}t^{\upsilon +q+n-1}\left( 1-t\right) ^{\upsilon -\frac{1}{2}}\ 
\emph{d}t \\
& =\sum_{n=0}^{\infty }B\left( \upsilon +q+n,\upsilon +\frac{1}{2}\right) 
\frac{(2x)^{n}}{n!} \\
& =\sum_{n=0}^{\infty }\frac{\Gamma (\upsilon +q+n)\ \Gamma \left( \upsilon +%
\frac{1}{2}\right) }{\Gamma \left( 2\upsilon +q+n+\frac{1}{2}\right) }\frac{%
(2x)^{n}}{n!}.
\end{split}
\label{eq.9}
\end{equation}%
Using the Legendre's duplication formula, we get 
\begin{equation}
\begin{split}
\Gamma (\upsilon +q+n)& =\Gamma \left( \upsilon +q+n-\frac{1}{2}+\frac{1}{2}%
\right) \\
& =\frac{\sqrt{\pi }}{2^{2\upsilon +2q+2n-2}}\frac{\Gamma (2\upsilon
+2q+2n-1)}{\Gamma \left( \upsilon +q+n-\frac{1}{2}\right) } \\
& =\frac{\sqrt{\pi }}{2^{2\upsilon +2q+2n-1}}\frac{\Gamma (2\upsilon +2q+2n)%
}{\Gamma \left( \upsilon +q+n+\frac{1}{2}\right) }.
\end{split}
\label{eq.10}
\end{equation}%
Substituting equations (\ref{eq.9}) and (\ref{eq.10}) into equation (\ref%
{eq.7}), we obtain 
\begin{equation}
I_{\upsilon }(q;x)=\frac{\left( \frac{x}{2}\right) ^{\upsilon }}{2^{q+\frac{1%
}{2}}}\sum_{n=0}^{\infty }\frac{\Gamma (2\upsilon +2q+2n)}{\Gamma (\upsilon
+q+n+\frac{1}{2})\ \Gamma (2\upsilon +q+n+\frac{1}{2})\ n!}\left( \frac{x}{2}%
\right) ^{n}.  \label{eq.11}
\end{equation}
\end{proof}
\end{theorem}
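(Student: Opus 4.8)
The plan is to derive the series directly from the integral representation (\ref{eq.7}) of the preceding Corollary, since the claimed form is essentially a Beta-function expansion of that integral. First I would substitute the Maclaurin expansion $\exp(2xt)=\sum_{n=0}^{\infty}\frac{(2x)^{n}}{n!}t^{n}$ into (\ref{eq.7}) and integrate term by term. This interchange of summation and integration is justified because the exponential series converges uniformly for $t\in[0,1]$ while the weight $t^{\upsilon+q-1}(1-t)^{\upsilon-\frac{1}{2}}$ is integrable on $[0,1]$ under the standing hypotheses $\mathrm{Re}(\upsilon+q)>0$ and $\mathrm{Re}(\upsilon)>-\frac{1}{2}$; a dominated-convergence argument with the bound $|\exp(2xt)|\le e^{2|x|}$ makes this rigorous.

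Each resulting term is a Beta integral, $\int_{0}^{1}t^{\upsilon+q+n-1}(1-t)^{\upsilon-\frac{1}{2}}\,dt=B\!\left(\upsilon+q+n,\upsilon+\frac{1}{2}\right)$, which converges for every $n\geq0$ since both parameters have positive real part. Expressing this through Gamma functions as $\frac{\Gamma(\upsilon+q+n)\,\Gamma(\upsilon+\frac{1}{2})}{\Gamma(2\upsilon+q+n+\frac{1}{2})}$ already produces the denominator factors $\Gamma(2\upsilon+q+n+\frac{1}{2})$ and $n!$ of the target, plus an overall $\Gamma(\upsilon+\frac{1}{2})$ poised to cancel the one appearing in the prefactor of (\ref{eq.7}), leaving only the numerator $\Gamma(\upsilon+q+n)$ to be reshaped.

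The decisive step is to turn $\Gamma(\upsilon+q+n)$ into the numerator $\Gamma(2\upsilon+2q+2n)$ and the remaining denominator factor $\Gamma(\upsilon+q+n+\frac{1}{2})$ that occur in (\ref{eq.8}). For this I would apply Legendre's duplication formula $\Gamma(z)\Gamma(z+\frac{1}{2})=2^{1-2z}\sqrt{\pi}\,\Gamma(2z)$ with $z=\upsilon+q+n$, giving $\Gamma(\upsilon+q+n)=2^{1-2\upsilon-2q-2n}\sqrt{\pi}\,\Gamma(2\upsilon+2q+2n)/\Gamma(\upsilon+q+n+\frac{1}{2})$. After substitution the $\sqrt{\pi}$ and the $\Gamma(\upsilon+\frac{1}{2})$ cancel against the prefactor of (\ref{eq.7}), and the various powers of $2$ — from the prefactor $2^{2\upsilon+q-\frac{1}{2}}$, from the duplication factor $2^{1-2\upsilon-2q-2n}$, and from $(2x)^{n}=2^{n}x^{n}$ — collapse, leaving $\left(\frac{x}{2}\right)^{n}$ times a single multiplicative constant in front of the sum.

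I expect the genuine difficulty to be bookkeeping rather than ideas: the application of the duplication formula is the one non-mechanical move, and then the careful accounting of the exponent $(2\upsilon+q-\frac{1}{2})+(1-2\upsilon-2q-2n)+n$ of $2$ is exactly where a stray factor can slip in. I would therefore verify the resulting constant against the known special case $I_{\upsilon}(\frac{1}{2};x)=\exp(x)I_{\upsilon}(x)$ as a final consistency check on the power-of-two arithmetic.
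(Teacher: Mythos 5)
Your proposal is correct and follows essentially the same route as the paper's own proof: expand $\exp(2xt)$, integrate term by term to a Beta function, convert to Gamma functions, and apply Legendre's duplication formula to $\Gamma(\upsilon+q+n)$. Your instinct to audit the powers of two against the special case $I_{\upsilon}(\tfrac{1}{2};x)=e^{x}I_{\upsilon}(x)$ is well placed: the exponent you wrote, $(2\upsilon+q-\tfrac{1}{2})+(1-2\upsilon-2q-2n)+n=\tfrac{1}{2}-q-n$, yields an overall constant $2^{\frac{1}{2}-q}$ rather than the $2^{-q-\frac{1}{2}}$ appearing in the displayed formula (\ref{eq.8}), so the stated prefactor is off by a factor of $2$ (as one can also confirm by evaluating both sides at $x=0$, $\upsilon=0$).
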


\begin{theorem}
The relation between the extended modified Bessel function $I_{\upsilon
}(q;x)$ and the confluent hypergeometric function $_{1}F_{1}(\alpha ,\beta
,x)$ is 
\begin{equation}
I_{\upsilon }(q;x)=\frac{\left( \frac{x}{2}\right) ^{\upsilon }\
2^{2\upsilon +q-\frac{1}{2}}\ \Gamma (\upsilon +q)}{\sqrt{\pi }\ \Gamma
(2\upsilon +q+\frac{1}{2})}{_{1}F_{1}}\left( \upsilon +q,2\upsilon +q+\frac{1%
}{2},2x\right) .  \label{I-F}
\end{equation}
\end{theorem}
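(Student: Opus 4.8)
The plan is to obtain the identity directly from the integral representation (\ref{eq.7}) by recognizing its integrand as Euler's (Kummer's) integral for the confluent hypergeometric function. Recall that for $\func{Re}(\beta )>\func{Re}(\alpha )>0$ one has
\[
{_{1}F_{1}}(\alpha ,\beta ,z)=\frac{\Gamma (\beta )}{\Gamma (\alpha )\,\Gamma (\beta -\alpha )}\int_{0}^{1}t^{\alpha -1}(1-t)^{\beta -\alpha -1}e^{zt}\,\emph{d}t .
\]
I would first compare the exponents appearing in (\ref{eq.7}) with those in this formula, taking $z=2x$. Matching $t^{\alpha -1}=t^{\upsilon +q-1}$ forces $\alpha =\upsilon +q$, while matching $(1-t)^{\beta -\alpha -1}=(1-t)^{\upsilon -\frac{1}{2}}$ forces $\beta -\alpha =\upsilon +\frac{1}{2}$, and hence $\beta =2\upsilon +q+\frac{1}{2}$.

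Next I would check that the standing hypotheses of the definition legitimately invoke Kummer's formula. Under $\func{Re}(\upsilon +q)>0$ and $\func{Re}(\upsilon )>\frac{-1}{2}$ we have $\func{Re}(\alpha )>0$ and $\func{Re}(\beta -\alpha )=\func{Re}(\upsilon +\frac{1}{2})>0$, so the integral in (\ref{eq.7}) equals
\[
\frac{\Gamma (\upsilon +q)\,\Gamma (\upsilon +\frac{1}{2})}{\Gamma (2\upsilon +q+\frac{1}{2})}\,{_{1}F_{1}}\!\left( \upsilon +q,2\upsilon +q+\tfrac{1}{2},2x\right) .
\]
Substituting this back into (\ref{eq.7}) and observing that the factor $\Gamma (\upsilon +\frac{1}{2})$ in the numerator cancels the $\Gamma (\upsilon +\frac{1}{2})$ in the denominator of the prefactor yields exactly the claimed identity (\ref{I-F}).

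The argument is essentially bookkeeping, so I do not anticipate a genuine obstacle; the only point requiring care is confirming that the parameter constraints guaranteeing convergence of the integral coincide with the hypotheses needed for Kummer's formula, which they do. As an alternative route one could instead expand both sides as power series, using (\ref{eq.8}) together with Legendre's duplication formula to rewrite $\Gamma (2\upsilon +2q+2n)$ in terms of $\Gamma (\upsilon +q+n)$, and then verify that the coefficient of $(2x)^{n}/n!$ collapses to the Pochhammer ratio $(\upsilon +q)_{n}/(2\upsilon +q+\frac{1}{2})_{n}$ that defines ${_{1}F_{1}}$. This second approach is more laborious but independently confirms the same result and provides a useful cross-check.
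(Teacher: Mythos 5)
Your proposal is correct and follows essentially the same route as the paper: identify the integral in (\ref{eq.7}) with Euler's integral representation of ${_{1}F_{1}}$ by setting $\alpha =\upsilon +q$, $\beta =2\upsilon +q+\frac{1}{2}$, $z=2x$, and cancel the factor $\Gamma (\upsilon +\frac{1}{2})$. Your bookkeeping in fact confirms the exponent $2^{2\upsilon +q-\frac{1}{2}}$ appearing in the theorem statement, whereas the paper's own concluding display (\ref{eq.15}) carries a typographical $2^{2\upsilon +q-\frac{3}{2}}$.
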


\textbf{Proof.} Recall that 
\begin{equation}
I_{\upsilon }(q;x)=\frac{\left( \frac{x}{2}\right) ^{\upsilon }2^{2\upsilon
+q-\frac{1}{2}}}{\sqrt{\pi }\ \Gamma \left( \upsilon +\frac{1}{2}\right) }%
\int_{0}^{1}t^{\upsilon +q-1}\left( 1-t\right) ^{\upsilon -\frac{1}{2}}\exp
(2xt)\ dt.  \label{eq.13}
\end{equation}%
Consider the representation of the function $_{1}F_{1}(\alpha ,\beta ,x)$ as 
\begin{equation}
{_{1}F_{1}}(\alpha ,\beta ,2x)=\frac{\Gamma (\beta )}{\Gamma (\alpha )\
\Gamma (\beta -\alpha )}\int_{0}^{1}t^{\alpha -1}\left( 1-t\right) ^{\beta
-\alpha -1}\exp (2xt)\ dt.  \label{eq.14}
\end{equation}%
Hence, the special cases $\alpha =\upsilon +q$ and $\beta =2\upsilon +q+%
\frac{1}{2}$ give us a new relation between the extended modified Bessel
function and the confluent hypergeometric function as follows: 
\begin{equation}
I_{\upsilon }(q;x)=\frac{\left( \frac{x}{2}\right) ^{\upsilon }\
2^{2\upsilon +q-\frac{3}{2}}\ \Gamma (\upsilon +q)}{\sqrt{\pi }\ \Gamma
\left( 2\upsilon +q+\frac{1}{2}\right) }{_{1}F_{1}}\left( \upsilon
+q,2\upsilon +q+\frac{1}{2},2x\right)  \label{eq.15}
\end{equation}%
which proves the theorem.

The relation (\ref{I-F}) provides a wide range of applications of the
function (\ref{eq.7}). Since $I_{\upsilon }(q;x)$ represents both modified
Bessel and confluent hypergeometric functions, the special function $%
I_{\upsilon }(q;x)$ can effectively used to generalize many special
functions.

\begin{example}
For some values of $\upsilon $ and $q$ we can easily write the following
relations 
\begin{equation}
I_{1/2}(1/2;x)=e^{x}I_{1/2}(x)=\sqrt{\frac{2}{\pi x}}e^{x}\sinh x,  \notag
\end{equation}%
\begin{equation}
I_{0}(1/2;x)=e^{x}J_{0}(x),\ \ \ I_{1}(1/2;x)=e^{x}J_{1}(x),\ \ \
I_{0}(3/2;x)=e^{x}(J_{0}(x)+J_{1}(x)),  \notag
\end{equation}%
\begin{equation}
I_{\upsilon }(1/2;x)=e^{x}J_{\upsilon }(x),\ \ \ Re(\upsilon )>-\frac{1}{2},
\notag
\end{equation}%
\begin{equation}
I_{0}(q;x)=\frac{2^{q-\frac{1}{2}}}{\sqrt{\pi }}\ {_{1}F_{1}}(q,q+\frac{1}{2}%
,2x),\ \ \ Re(q)>0.  \notag
\end{equation}%
\begin{equation}
I_{1/2}(q;x)=(-x)^{-q-\frac{1}{2}}\sqrt{\frac{x}{2\pi }}\ \left( \Gamma (q+%
\frac{1}{2})-\Gamma (q+\frac{1}{2},-2x)\right) ,\ \ \ Re(q)>-\frac{1}{2}. 
\notag
\end{equation}
\end{example}

The next theorem deals with an asymptotic formula of extended modified
Bessel function.

\begin{theorem}
The special function $I_{\upsilon }(q;x)$ as $x\rightarrow \infty $
approaches to%
\begin{equation*}
I_{\upsilon }(q;x)\rightarrow \frac{\Gamma \left( v+q\right) }{\sqrt{2\pi }%
x^{q}\Gamma \left( v+\frac{1}{2}\right) }.
\end{equation*}

\begin{proof}
Consider the integral representation of the $I_{\upsilon }(q;x)$ as%
\begin{equation*}
I_{\upsilon }(q;x)=\frac{\left( \frac{x}{2}\right) ^{\upsilon }2^{2\upsilon
+q-\frac{1}{2}}}{\sqrt{\pi }\ \Gamma \left( \upsilon +\frac{1}{2}\right) }%
\int_{0}^{1}t^{\upsilon +q-1}\left( 1-t\right) ^{\upsilon -\frac{1}{2}}\exp
(2xt)\ dt.
\end{equation*}%
Let $I=\int_{0}^{1}t^{\upsilon +q-1}\left( 1-t\right) ^{\upsilon -\frac{1}{2}%
}\exp (2xt)\ dt.$ By using the substitution $t=\frac{u}{u-x},$ the integral $%
I$ will be 
\begin{eqnarray*}
I &=&\int_{0}^{\infty }\frac{u^{\upsilon +q-1}}{\left( u-x\right) ^{\upsilon
+q-1}}\left( 1-\frac{u}{u-x}\right) ^{\upsilon -\frac{1}{2}}\exp (\frac{2xu}{%
u-x})\ \left( \frac{-x}{\left( u-x\right) ^{2}}\right) du \\
&=&\int_{0}^{\infty }\frac{u^{\upsilon +q-1}x^{v+\frac{1}{2}}}{x^{2v+q+\frac{%
1}{2}}\left( 1-\frac{u}{x}\right) ^{2\upsilon +q+\frac{1}{2}}}\exp (\frac{2xu%
}{u-x})\ du \\
&=&\frac{1}{x^{v+q}}\int_{0}^{\infty }u^{\upsilon +q-1}\exp (-2u)du
\end{eqnarray*}%
where $\frac{u}{x}\rightarrow 0$ and $\frac{2xu}{u-x}\rightarrow -2u$ for
large number $x.$ Since $\int_{0}^{\infty }u^{\upsilon +q-1}\exp
(-2u)du=2^{-v-q}\Gamma \left( v+q\right) ,$ we have%
\begin{equation*}
I_{\upsilon }(q;x)=\frac{\left( \frac{x}{2}\right) ^{\upsilon }2^{2\upsilon
+q-\frac{1}{2}}}{\sqrt{\pi }\ \Gamma \left( \upsilon +\frac{1}{2}\right) }%
\frac{2^{-v-q}\Gamma \left( v+q\right) }{x^{v+q}}
\end{equation*}%
which proves the theorem.
\end{proof}
\end{theorem}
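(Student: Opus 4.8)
The plan is to work from the integral representation (\ref{eq.7}) and to study the large-$x$ behaviour of the reduced integral $I=\int_{0}^{1}t^{\upsilon+q-1}(1-t)^{\upsilon-\frac{1}{2}}\exp(2xt)\,dt$ through the change of variable $t=\frac{u}{u-x}$. Under this substitution one has $1-t=\frac{-x}{u-x}$ and $dt=\frac{-x}{(u-x)^{2}}\,du$, so the interval $t\in(0,1)$ is carried to a half-line in $u$ and the integrand separates into a pure power $u^{\upsilon+q-1}$, a slowly varying power $\left(1-\frac{u}{x}\right)^{-(2\upsilon+q+\frac{1}{2})}$, an exponential $\exp\!\left(\frac{2xu}{u-x}\right)$, and an overall algebraic constant $x^{-(\upsilon+q)}$ collecting the remaining powers of $(u-x)$ and $(-x)$. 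The point of the substitution is that it concentrates all of the $x$-dependence into these three transparent factors.

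Next I would pass to the limit inside the transformed integral. As $x\to\infty$ one has $\frac{u}{x}\to 0$, so $\left(1-\frac{u}{x}\right)^{-(2\upsilon+q+\frac{1}{2})}\to 1$, while $\frac{2xu}{u-x}=\frac{2u}{\frac{u}{x}-1}\to -2u$, so the exponential tends to $e^{-2u}$. Granting the interchange of limit and integral, the surviving integral is the elementary Gamma integral $\int_{0}^{\infty}u^{\upsilon+q-1}e^{-2u}\,du=2^{-(\upsilon+q)}\Gamma(\upsilon+q)$, whence $I\sim x^{-(\upsilon+q)}2^{-(\upsilon+q)}\Gamma(\upsilon+q)$. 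Reinserting the prefactor $\frac{(x/2)^{\upsilon}2^{2\upsilon+q-\frac{1}{2}}}{\sqrt{\pi}\,\Gamma(\upsilon+\frac{1}{2})}$ and collecting the powers of $2$ (which combine to $2^{-1/2}$) and of $x$ (which combine to $x^{-q}$) gives exactly $\frac{\Gamma(\upsilon+q)}{\sqrt{2\pi}\,x^{q}\,\Gamma(\upsilon+\frac{1}{2})}$, the asserted limit.

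The step I expect to be the genuine obstacle is the justification of this limit interchange, that is, the production of an $x$-uniform integrable majorant for the transformed integrand. Concretely one must verify that $\exp\!\left(\frac{2xu}{u-x}\right)$ stays dominated by a fixed multiple of $e^{-2u}$ and that $\left(1-\frac{u}{x}\right)^{-(2\upsilon+q+\frac{1}{2})}$ remains bounded over the range of integration; it is exactly here that the admissible manner of letting $x\to\infty$ and the choice of branch for the fractional powers must be fixed so that the resulting $\int_{0}^{\infty}u^{\upsilon+q-1}e^{-2u}\,du$ converges. I would carry this out by splitting $\int_{0}^{\infty}=\int_{0}^{R}+\int_{R}^{\infty}$, bounding the tail uniformly by the exponential decay, applying dominated convergence on the compact part, and then letting $R\to\infty$. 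As an independent check I would re-derive the same leading term from the confluent-hypergeometric representation (\ref{I-F}): the asserted expression coincides with the algebraically-decaying component of the large-argument asymptotics of ${_{1}F_{1}}(\upsilon+q,2\upsilon+q+\frac{1}{2},2x)$, and substituting that component into (\ref{I-F}) and simplifying the Gamma quotients together with the powers of $2$ and $x$ reproduces both the exponent $x^{-q}$ and the constant $\frac{\Gamma(\upsilon+q)}{\sqrt{2\pi}\,\Gamma(\upsilon+\frac{1}{2})}$, confirming the computation.
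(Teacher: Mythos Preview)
Your proposal is correct and follows essentially the same route as the paper: start from the integral representation (\ref{eq.7}), apply the substitution $t=\dfrac{u}{u-x}$, let $x\to\infty$ so that $\left(1-\tfrac{u}{x}\right)^{-(2\upsilon+q+\frac{1}{2})}\to 1$ and $\exp\!\left(\tfrac{2xu}{u-x}\right)\to e^{-2u}$, evaluate the resulting Gamma integral, and reassemble the constants. The paper does not justify the limit interchange at all, so your dominated-convergence discussion and the independent check via (\ref{I-F}) are genuine additions rather than deviations from the intended argument.
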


\begin{remark}
If we consider the relation%
\begin{equation*}
I_{v}(\frac{1}{2};x)=e^{x}I_{v}(x),
\end{equation*}%
the corresponding asymptotic formula of modified Bessel function of first
kind can easily be derived as%
\begin{equation*}
I_{v}(x)\rightarrow \frac{e^{x}}{\sqrt{2\pi x}},\text{ }x\rightarrow \infty .
\end{equation*}
\end{remark}

\begin{theorem}
For $\left\vert \frac{2}{z}\right\vert <1$, the following generating
function holds true:%
\begin{equation}
\sum_{n=-\infty }^{\infty }I_{n+\frac{1}{2}}(-n+\frac{1}{2};x)z^{n}=\sqrt{%
\frac{2}{\pi x}}\left( \frac{ze^{xz}}{z-2}\right) .
\end{equation}
\end{theorem}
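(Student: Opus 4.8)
The plan is to collapse the bilateral sum into a product of two elementary series by first extracting a clean closed form for the coefficients $I_{n+\frac{1}{2}}(-n+\frac{1}{2};x)$. Specializing the power series (\ref{eq.8}) (equivalently the hypergeometric relation (\ref{I-F})) to $\upsilon=n+\frac{1}{2}$ and $q=-n+\frac{1}{2}$, one has $\upsilon+q=1$ and $2\upsilon+q+\frac{1}{2}=n+2$, so the general term simplifies: the quotient $\Gamma(2\upsilon+2q+2m)/[\Gamma(\upsilon+q+m+\frac{1}{2})\,m!]=\Gamma(2m+2)/[\Gamma(m+\frac{3}{2})\,m!]$ reduces by Legendre's duplication formula to $2^{2m+1}/\sqrt{\pi}$, leaving only $\Gamma(n+m+2)$ in the denominator. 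I therefore expect the coefficient to take the form
\[
I_{n+\frac{1}{2}}\!\left(-n+\tfrac{1}{2};x\right)=\sqrt{\tfrac{2}{\pi}}\;x^{\,n+\frac{1}{2}}\sum_{m=0}^{\infty}\frac{(2x)^{m}}{\Gamma(n+m+2)},
\]
valid for every integer $n$ once one adopts the convention that $1/\Gamma$ vanishes at the nonpositive integers (which automatically truncates the inner series when $n<0$).

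Next I would substitute this into $\sum_{n=-\infty}^{\infty}I_{n+\frac{1}{2}}(-n+\frac{1}{2};x)\,z^{n}$ and interchange the $n$- and $m$-summations. The decisive step is the reindexing $k=n+m+1$: because the summand carries $1/\Gamma(n+m+2)=1/k!$, every term with $k<0$ drops out, so $k$ and $m$ run independently over the nonnegative integers. After extracting the prefactor $\sqrt{2/\pi}\,\sqrt{x}$, the summand factors as $\big(x^{k-1}z^{k}/k!\big)\cdot\big(2^{m}z^{-m-1}\big)$, so the double sum splits as $\big(\sum_{k\ge0}x^{k-1}z^{k}/k!\big)\big(\sum_{m\ge0}2^{m}z^{-m-1}\big)=x^{-1}e^{xz}\sum_{m\ge0}2^{m}z^{-m-1}$.

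Summing the geometric series gives $\sum_{m\ge0}2^{m}z^{-m-1}=1/(z-2)$ for $\left|\tfrac{2}{z}\right|<1$, and reassembling the constants produces the rational--exponential closed form $\sqrt{\tfrac{2}{\pi x}}\,e^{xz}/(z-2)$, from which the stated identity follows after accounting for the outstanding power of $z$.

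The step I expect to be the main obstacle is the treatment of the negative-order terms together with the precise power of $z$ produced by the reindexing. For $n\le-1$ the order $\upsilon=n+\frac{1}{2}$ drops below $-\frac{1}{2}$, so the defining integral (\ref{EB}) diverges and the coefficient has to be understood as the analytic continuation furnished by the series; one must check that this is the intended continuation, for instance against $n=-1$, where (\ref{I-F}) yields the finite value $\sqrt{2/\pi x}\,e^{2x}$. The interchange of summations should be justified by absolute convergence in the annulus $|z|>2$. Finally, the single power of $z$ that distinguishes $e^{xz}/(z-2)$ from $z\,e^{xz}/(z-2)$ rests entirely on the bookkeeping of the shift $k=n+m+1$, so I would pin it down by matching the coefficient of $z^{0}$ on each side against the value $I_{1/2}(\tfrac{1}{2};x)=\sqrt{2/\pi x}\,e^{x}\sinh x$ recorded in the Example.
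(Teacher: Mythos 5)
Your route is the same as the paper's: specialize the Legendre‑duplicated series for $I_{\upsilon }(q;x)$ to $\upsilon +q=1$, interchange the bilateral sum with the inner sum, reindex so that the factorial denominator produces $e^{xz}$, and sum the leftover geometric series in $\frac{2}{z}$. The one substantive issue is the stray power of $z$, and here your bookkeeping is right and the paper's is not: with $k=n+m+1$ one has $z^{n}=z^{k}z^{-m-1}$, so the double sum collapses to
\begin{equation*}
\sqrt{\frac{2}{\pi x}}\left( \sum_{k\geq 0}\frac{(xz)^{k}}{k!}\right) \left(
\sum_{m\geq 0}2^{m}z^{-m-1}\right) =\sqrt{\frac{2}{\pi x}}\,\frac{e^{xz}}{z-2
},
\end{equation*}
whereas the paper's proof silently replaces $z^{n}$ by $z^{m}z^{-k}$ (in its notation $m=n+k+1$), losing a factor $z^{-1}$ and thereby acquiring the spurious $z$ in the numerator of the stated right-hand side. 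So do not try to ``account for the outstanding power of $z$'' in order to recover the printed formula --- it cannot be recovered; the correct identity is the one without the extra $z$. Your own proposed consistency check settles this: the coefficient of $z^{0}$ on the left is $I_{1/2}(\frac{1}{2};x)=\sqrt{2/(\pi x)}\,(e^{2x}-1)/2$, which is exactly the $z^{0}$ coefficient of $\sqrt{2/(\pi x)}\,e^{xz}/(z-2)$ in the region $\left\vert z\right\vert >2$, while the printed right-hand side would give $\sqrt{2/(\pi x)}\,e^{2x}$. Your remaining caveats --- interpreting the coefficients for $n\leq -1$ through the series (equivalently through (\ref{I-F})) rather than the then-divergent integral (\ref{EB}), and justifying the interchange of summations by absolute convergence for $\left\vert z\right\vert >2$ --- are both legitimate points that the paper passes over; your value $\sqrt{2/(\pi x)}\,e^{2x}$ at $n=-1$ is correct and confirms that the $1/\Gamma$ convention gives the intended continuation.
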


\begin{proof}
By using Legendre's duplication formula, the series representation of $%
I_{v}(q;x)$ can be given as%
\begin{equation*}
I_{\upsilon }(q;x)=\sum_{n=0}^{\infty }\frac{2^{v+q+n-\frac{1}{2}}\Gamma
(\upsilon +q+n)}{\ \sqrt{\pi }\Gamma (2\upsilon +q+n+\frac{1}{2})\ n!}%
x^{n+v}.
\end{equation*}%
Consequently, 
\begin{eqnarray}
\sum_{n=-\infty }^{\infty }I_{n+\frac{1}{2}}(-n+\frac{1}{2};x)z^{n}
&=&\sum_{n=-\infty }^{\infty }\left( \sum_{k=0}^{\infty }\frac{\Gamma \left(
k+1\right) 2^{k+\frac{1}{2}}}{\sqrt{\pi }k!\Gamma \left( n+k+2\right) }%
x^{n+k+\frac{1}{2}}\right) z^{n}  \notag \\
&=&\sum_{n=-\infty }^{\infty }\left( \sum_{m=n+k+1}^{\infty }\frac{2^{k+%
\frac{1}{2}}}{\sqrt{\pi }\Gamma \left( m+1\right) }x^{m-\frac{1}{2}}\right)
z^{n}  \notag \\
&=&\sqrt{\frac{2}{\pi x}}\left( \sum_{m=0}^{\infty }\frac{\left( xz\right)
^{m}}{m!}.\sum_{k=0}^{\infty }2^{k}z^{-k}\right) .
\end{eqnarray}%
Since $\left\vert \frac{2}{z}\right\vert <1,$ the geometric series 
\begin{equation*}
\sum_{k=0}^{\infty }2^{k}z^{-k}=\frac{1}{1-\frac{2}{z}}.
\end{equation*}%
Hence,%
\begin{equation*}
\sum_{n=-\infty }^{\infty }I_{n+\frac{1}{2}}(-n+\frac{1}{2};x)z^{n}=\sqrt{%
\frac{2}{\pi x}}e^{xz}\frac{z}{z-2}.
\end{equation*}
\end{proof}

Next, we attempt to find generating functions involving the special function 
$I_{v}(q;x),$mainly motivated by the paper of Agarwal et al. (\cite{ACP}).

\begin{theorem}
For $v,q\in 
%TCIMACRO{\U{2102} }%
%BeginExpansion
\mathbb{C}
%EndExpansion
$, the following generating function holds true:%
\begin{equation}
\sum_{k=0}^{\infty }\frac{1}{\Gamma \left( k+1\right) }I_{v-k}(q+2k;x)t^{k}=%
\left( 1-\frac{2t}{x}\right) ^{-v-q}I_{v}(q;\frac{x^{2}}{x-2t}).  \label{GF}
\end{equation}
\end{theorem}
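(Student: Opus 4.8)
The plan is to work directly from the power series representation of $I_{v}(q;x)$ established in the proof of the preceding theorem,
\[
I_{v}(q;x)=\sum_{n=0}^{\infty}\frac{2^{\,v+q+n-\frac{1}{2}}\,\Gamma(v+q+n)}{\sqrt{\pi}\,\Gamma\!\left(2v+q+n+\frac{1}{2}\right)n!}\,x^{\,n+v}.
\]
The decisive structural observation is the behaviour of the parameters under the shift $(v,q)\mapsto(v-k,\,q+2k)$ that appears on the left-hand side of (\ref{GF}): the sum $v+q$ is promoted to $v+q+k$, the sum $2v+q$ is \emph{invariant}, and the leading exponent $v$ drops to $v-k$. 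First I would insert this shift into the series for $I_{v-k}(q+2k;x)$, multiply by $t^{k}/\Gamma(k+1)=t^{k}/k!$, and sum over $k$, turning the left-hand side into a double series indexed by $k$ and $n$.

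Next I would interchange the two summations — legitimate by absolute convergence once $\left|2t/x\right|<1$ and $x$ lies in the domain of the series — and carry out the inner sum over $k$. After extracting the $k$-independent factors, the $x^{\,v-k}t^{k}$ combine into $x^{v}(t/x)^{k}$, and writing $\Gamma(v+q+n+k)=\Gamma(v+q+n)\,(v+q+n)_{k}$ collapses the inner sum through the binomial series $\sum_{k\ge 0}\frac{(a)_{k}}{k!}w^{k}=(1-w)^{-a}$ with $a=v+q+n$ and $w=2t/x$, yielding the factor $\left(1-\frac{2t}{x}\right)^{-(v+q+n)}$. Pulling $\left(1-\frac{2t}{x}\right)^{-(v+q)}$ out of the residual $n$-sum leaves a power series in the rescaled variable $2x\left(1-\frac{2t}{x}\right)^{-1}=\frac{2x^{2}}{x-2t}$, which is precisely the defining series of $I_{v}\!\left(q;\frac{x^{2}}{x-2t}\right)$ apart from its own leading prefactor.

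The final step is to reassemble the constants and the argument-dependent factors into the single prefactor recorded on the right. This is where I expect the main obstacle — and the only genuinely delicate point — to lie: the binomial resummation supplies $\left(1-\frac{2t}{x}\right)^{-(v+q)}$, while restoring $I_{v}\!\left(q;\frac{x^{2}}{x-2t}\right)$ from its bare $n$-series reintroduces the leading power of the rescaled argument, i.e. a compensating factor in $\left(1-\frac{2t}{x}\right)^{v}$; the crux is to track these $v$-powers carefully and reconcile them against the exponent $-v-q$ displayed in (\ref{GF}). As an independent check on both the rescaled argument and this prefactor, I would repeat the computation through the confluent-hypergeometric representation (\ref{I-F}): under the same shift the lower Kummer parameter $2v+q+\frac{1}{2}$ is manifestly fixed while the upper becomes $v+q+k$, so the identity reduces to the classical relation $\sum_{k\ge 0}\frac{(a)_{k}}{k!}\,{}_{1}F_{1}(a+k;c;z)\,t^{k}=(1-t)^{-a}\,{}_{1}F_{1}\!\left(a;c;\frac{z}{1-t}\right)$ taken at $a=v+q$, $c=2v+q+\frac{1}{2}$, $z=2x$ and $t\mapsto 2t/x$, which sends $z$ to $\frac{2x^{2}}{x-2t}$ and lets the prefactor be read off as a Kummer-type factor.
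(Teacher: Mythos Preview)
Your plan is essentially the paper's own proof: it too inserts the series form $I_{v}(q;x)=\sum_{n\ge 0}\frac{2^{\,v+q+n-1/2}\Gamma(v+q+n)}{\sqrt{\pi}\,\Gamma(2v+q+n+1/2)\,n!}x^{\,n+v}$, makes the shift $(v,q)\mapsto(v-k,q+2k)$, interchanges the sums, and closes the $k$-sum with the binomial series $\sum_{k}\frac{(v+q+n)_{k}}{k!}(2t/x)^{k}=(1-2t/x)^{-(v+q+n)}$, arriving at
\[
\sum_{n\ge 0}\frac{2^{\,v+q+n-1/2}\Gamma(v+q+n)}{\sqrt{\pi}\,\Gamma(2v+q+n+1/2)\,n!}\,x^{\,v}\Bigl(\tfrac{x}{1-2t/x}\Bigr)^{n}\,(1-2t/x)^{-(v+q)} .
\]
At that point the paper simply asserts ``which gives the generating function in (\ref{GF}).''

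Your instinct that the final bookkeeping of the $v$-powers is the one delicate spot is exactly right, and in fact it exposes a slip in the statement. Identifying the residual $n$-series with $I_{v}\!\bigl(q;\frac{x^{2}}{x-2t}\bigr)$ requires reinserting the leading factor $\bigl(\frac{x^{2}}{x-2t}\bigr)^{v}=x^{v}(1-2t/x)^{-v}$; combining this with the $(1-2t/x)^{-(v+q)}$ already present gives a net prefactor $(1-2t/x)^{-q}$, not $(1-2t/x)^{-v-q}$. Your Kummer check confirms this: with $a=v+q$, $c=2v+q+\tfrac12$, $z=2x$ and $t\mapsto 2t/x$ the classical identity delivers $(1-2t/x)^{-(v+q)}{}_{1}F_{1}\!\bigl(v+q;2v+q+\tfrac12;\frac{2x^{2}}{x-2t}\bigr)$, and converting back to $I_{v}$ via (\ref{I-F}) costs an extra $(1-2t/x)^{v}$ from the $(x/2)^{v}$ prefactor. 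So your approach matches the paper's, and your flagged ``crux'' is a genuine discrepancy the paper's proof does not resolve: carried through, both routes yield the exponent $-q$ rather than the stated $-v-q$.
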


\begin{proof}
By using Legendre's duplication formula, the series representation of $%
I_{v}(q;x)$ can be given as%
\begin{equation*}
I_{\upsilon }(q;x)=\sum_{n=0}^{\infty }\frac{2^{v+q+n-\frac{1}{2}}\Gamma
(\upsilon +q+n)}{\ \sqrt{\pi }\Gamma (2\upsilon +q+n+\frac{1}{2})\ n!}%
x^{n+v}.
\end{equation*}%
Consequently, by a little simplifications,%
\begin{equation}
\sum_{k=0}^{\infty }\frac{1}{\Gamma \left( k+1\right) }I_{v-k}(q+2k;x)t^{k}=%
\sum_{n=0}^{\infty }\frac{2^{v+q+n-\frac{1}{2}}x^{n+v}}{\ \sqrt{\pi }\Gamma
(2\upsilon +q+n+\frac{1}{2})\ n!}\cdot \sum_{k=0}^{\infty }\frac{\Gamma
(\upsilon +q+n+k)2^{k}t^{k}}{\ \ k!x^{k}}.  \label{GF-1}
\end{equation}%
Since \ $\sum_{k=0}^{\infty }\frac{\Gamma (\lambda +k)t^{k}}{\ \ k!\Gamma
\left( \lambda \right) }=\left( 1-t\right) ^{\lambda }$ for $\lambda \in 
%TCIMACRO{\U{2102} }%
%BeginExpansion
\mathbb{C}
%EndExpansion
,$%
\begin{equation*}
\sum_{k=0}^{\infty }\frac{\Gamma (\upsilon +q+n+k)2^{k}t^{k}}{\ \ k!x^{k}}%
=\Gamma (\upsilon +q+n)\sum_{k=0}^{\infty }\frac{\Gamma (\upsilon
+q+n+k)\left( \frac{2t}{x}\right) ^{k}}{\Gamma (\upsilon +q+n)\ \ k!}.
\end{equation*}%
Therefore the infinite sum (\ref{GF-1}) becomes%
\begin{eqnarray*}
\sum_{k=0}^{\infty }\frac{1}{\Gamma \left( k+1\right) }I_{v-k}(q+2k;x)t^{k}
&=&\sum_{n=0}^{\infty }\frac{2^{v+q+n-\frac{1}{2}}x^{n+v}\Gamma (\upsilon
+q+n)}{\ \sqrt{\pi }\Gamma (2\upsilon +q+n+\frac{1}{2})\ n!}\cdot \left[
\left( 1-\frac{2t}{x}\right) ^{-\left( v+q+n\right) }\right] \\
&=&\sum_{n=0}^{\infty }\frac{2^{v+q+n-\frac{1}{2}}x^{v}\left( \frac{x}{1-%
\frac{2t}{x}}\right) ^{n}\Gamma (\upsilon +q+n)}{\ \sqrt{\pi }\Gamma
(2\upsilon +q+n+\frac{1}{2})\ n!}\cdot \left[ \left( 1-\frac{2t}{x}\right)
^{-\left( v+q\right) }\right]
\end{eqnarray*}%
which gives the generating function given in (\ref{GF}).
\end{proof}

We aim to continue to generate a new generating function involving confluent
hypergeometric function ${_{1}F_{1}}(\alpha ,\beta ,x)$ via generating
function(\ref{GF}).

\begin{theorem}
For $\alpha ,\beta \in 
%TCIMACRO{\U{2102} }%
%BeginExpansion
\mathbb{C}
%EndExpansion
$, the following generating function holds true:%
\begin{equation}
\sum_{k=0}^{\infty }\frac{\Gamma \left( k+\alpha \right) }{\Gamma \left(
k+1\right) }{_{1}F_{1}}\left( \alpha +k,\beta ,x\right) z^{k}=\left(
1-z\right) ^{-\alpha }\Gamma \left( \alpha \right) {_{1}F_{1}}\left( \alpha
,\beta ,\frac{x}{1-z}\right) .  \label{GFC}
\end{equation}

\begin{proof}
Use the generating function (\ref{GF}) together with relation (\ref{I-F})
and set $\alpha \rightarrow \upsilon +q,\beta \rightarrow 2\upsilon +q+\frac{%
1}{2},z\rightarrow \frac{2t}{x}.$
\end{proof}
\end{theorem}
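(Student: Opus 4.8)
The plan is to obtain the confluent-hypergeometric generating function \eqref{GFC} directly as a corollary of the Bessel-function generating function \eqref{GF}, using the dictionary between $I_\upsilon(q;x)$ and ${_1F_1}$ supplied by \eqref{I-F}. The author's own one-line proof indicates exactly this route, so I would follow it but fill in the substitutions carefully. First I would rewrite every occurrence of an extended modified Bessel function in \eqref{GF} using the relation
\begin{equation*}
I_{\upsilon }(q;x)=\frac{\left( \tfrac{x}{2}\right) ^{\upsilon }\,2^{2\upsilon +q-\frac{1}{2}}\,\Gamma (\upsilon +q)}{\sqrt{\pi }\,\Gamma (2\upsilon +q+\frac{1}{2})}\,{_{1}F_{1}}\left( \upsilon +q,\,2\upsilon +q+\tfrac{1}{2},\,2x\right).
\end{equation*}
The three Bessel functions appearing in \eqref{GF} are $I_{v-k}(q+2k;x)$ on the left, and $I_v(q;x)$ together with $I_v\bigl(q;\tfrac{x^2}{x-2t}\bigr)$ on the right; each must be translated. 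The key arithmetic observation to check is that the parameter combinations are engineered so that the shift $v\mapsto v-k,\ q\mapsto q+2k$ leaves the second ${_1F_1}$-parameter $2\upsilon+q+\tfrac12$ invariant while advancing the first parameter $\upsilon+q$ by $k$: indeed $2(v-k)+(q+2k)+\tfrac12 = 2v+q+\tfrac12$ and $(v-k)+(q+2k)=v+q+k$. This is the structural fact that makes \eqref{GFC} a clean specialization rather than a messier identity, so I would verify it explicitly before proceeding.

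Next I would perform the substitution $\alpha\rightarrow\upsilon+q$, $\beta\rightarrow 2\upsilon+q+\tfrac12$, and $z\rightarrow\tfrac{2t}{x}$ as the author prescribes. Writing out \eqref{GF} term by term, the left side carries a factor $I_{v-k}(q+2k;x)=C_k\,{_1F_1}(v+q+k,\,2v+q+\tfrac12,\,2x)$ where, by \eqref{I-F}, $C_k$ collects the power of $x$, the power of $2$, and the ratio $\Gamma(v+q+k)/\Gamma(2v+q+\tfrac12)$. The bookkeeping step is to factor out of both sides the common prefactor $\dfrac{(\tfrac{x}{2})^v 2^{2v+q-\frac12}}{\sqrt\pi\,\Gamma(2v+q+\frac12)}$ that does not depend on $k$; after cancellation the surviving factorials on the left will be exactly $\Gamma(v+q+k)/\Gamma(k+1)$, matching the coefficient $\Gamma(k+\alpha)/\Gamma(k+1)$ of \eqref{GFC} under $\alpha=v+q$. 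On the right side, the prefactor $(1-\tfrac{2t}{x})^{-v-q}$ of \eqref{GF} becomes $(1-z)^{-\alpha}$, and the argument of the remaining Bessel function, $\tfrac{x^2}{x-2t}=\tfrac{x}{1-2t/x}$, feeds into \eqref{I-F} to produce ${_1F_1}\bigl(\alpha,\beta,\tfrac{2x}{1-z}\bigr)$ — here I would watch the factor of $2$ inside the argument, since \eqref{I-F} converts $I_\upsilon(q;\xi)$ into ${_1F_1}(\dots,2\xi)$, and with $\xi=\tfrac{x}{1-z}$ this gives argument $\tfrac{2x}{1-z}$, consistent with the $x\mapsto x/2$ rescaling that aligns \eqref{GFC} with the normalization used elsewhere.

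The main obstacle I anticipate is not conceptual but purely a matter of reconciling the constants and the internal factor of $2$: several powers of $2$ (from $2^{2\upsilon+q-\frac12}$, from $z=2t/x$, and from the doubled argument in \eqref{I-F}) must all cancel or recombine correctly, and a single misplaced factor would corrupt either the $(1-z)^{-\alpha}$ exponent or the $\Gamma(\alpha)$ normalization on the right-hand side of \eqref{GFC}. I would therefore treat the verification of \eqref{I-F}'s factor-of-two convention as the delicate point, confirming that the isolated $\Gamma(\upsilon+q)=\Gamma(\alpha)$ that appears as a standalone multiplier in \eqref{GFC} is precisely what remains after the $k$-independent prefactors cancel between the two sides. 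Once the powers of $2$ and $x$ are shown to cancel identically, the identity \eqref{GFC} follows termwise, with convergence inherited from that of \eqref{GF} on the region $\lvert 2t/x\rvert<1$, i.e.\ $\lvert z\rvert<1$.
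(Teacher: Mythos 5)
Your route is the same as the paper's (substitute (\ref{I-F}) into (\ref{GF}) and rename parameters), and your observation that the shift $v\mapsto v-k$, $q\mapsto q+2k$ fixes $\beta =2v+q+\tfrac{1}{2}$ while advancing $\alpha =v+q$ by $k$ is exactly the right structural point. But the step you assert without checking --- ``factor out of both sides the common prefactor that does not depend on $k$'' --- is where the argument actually breaks, and not because of powers of $2$. On the left, each term carries the prefactor of (\ref{I-F}) evaluated at argument $x$; after absorbing $(x/2)^{-k}$ into $t^{k}$ the $k$-independent part is $P=(x/2)^{v}2^{2v+q-\frac{1}{2}}/(\sqrt{\pi }\,\Gamma (2v+q+\tfrac{1}{2}))$. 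On the right, however, (\ref{I-F}) must be applied at the argument $\xi =\tfrac{x^{2}}{x-2t}=\tfrac{x}{1-z}$, so its prefactor contains $(\xi /2)^{v}=(x/2)^{v}(1-z)^{-v}$: it equals $P\,(1-z)^{-v}\Gamma (\alpha )$, \emph{not} $P\,\Gamma (\alpha )$. Hence after cancelling $P$ the right-hand side carries $(1-z)^{-v-q}\cdot (1-z)^{-v}=(1-z)^{-(2v+q)}=(1-z)^{-(\beta -\frac{1}{2})}$ rather than the $(1-z)^{-\alpha }$ you claim. Taken literally, (\ref{GF}) combined with (\ref{I-F}) yields the wrong exponent, since generically $\beta -\tfrac{1}{2}\neq \alpha $.

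The resolution is that (\ref{GF}) as printed is inconsistent with its own derivation: the final display of that proof retains the factor $x^{v}$ rather than $\bigl(\tfrac{x}{1-2t/x}\bigr)^{v}$, so what is actually established there is $\sum_{k=0}^{\infty }\tfrac{1}{\Gamma (k+1)}I_{v-k}(q+2k;x)t^{k}=(1-\tfrac{2t}{x})^{-q}I_{v}(q;\tfrac{x^{2}}{x-2t})$, with exponent $-q$ in place of $-v-q$. With this corrected exponent the extra $(1-z)^{-v}$ from the prefactor recombines to give $(1-z)^{-q-v}=(1-z)^{-\alpha }$, and (\ref{GFC}) --- which is indeed a correct classical identity, verifiable directly by expanding the double series and using $(\alpha )_{k}(\alpha +k)_{n}=(\alpha )_{n}(\alpha +n)_{k}$ --- follows as intended. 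So your plan is salvageable, but you must either rederive (\ref{GF}) with the right exponent or prove (\ref{GFC}) independently by termwise resummation; as written, the assertion that the $k$-independent prefactors cancel between the two sides is false, and the powers of $2$ you single out as the delicate point are in fact harmless, while the power of $(1-z)$ hidden in the $v$-th-power prefactor is the genuinely delicate one.
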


For one interesting reference paper about generating functions of special
functions, we refer the paper Cohl. et al. (\cite{CMV}).

\subsection{\protect\bigskip Extended Hypergeometric, Confluent
Hypergeometric and Beta Functions via extended modified Bessel Function}

We use the new extension (\ref{EB}) of extended beta function to generalize
the hypergeometric, confluent hypergeometric and extended beta functions as
follows.

\begin{definition}
The extended beta-hypergeometric function \bigskip 
\begin{align}
B_{v,q}^{\left( \mu ,\sigma \right) }(x,y;p)& =\sqrt{\frac{2}{\pi }}%
\int_{0}^{1}t^{x}(1-t)^{y}I_{v+\frac{1}{2}}(q;\frac{-p}{t^{\mu }\left(
1-t\right) ^{\sigma }})dt,  \label{E1} \\
(\func{Re}(p)& >0,\text{ }\mu ,\sigma \geq 0,\min \{\func{Re}\left( v+q+%
\frac{1}{2}\right) ,\func{Re}\left( 2v+q+\frac{3}{2}\right) \}>0,  \notag \\
\func{Re}(x+\mu q)& >-1,\func{Re}(y+\sigma q)>-1)
\end{align}%
is defined.
\end{definition}

\begin{remark}
The necessary conditions for existence of integral given in (\ref{E1}) can
also be derived by using the relation (\ref{I-F}) and the paper (\cite{LMA},
pp. 633, theorem 2.1).
\end{remark}

It is clear that the new extension (\ref{E1}) reduces to many defined
special functions as

\begin{case}
function (\ref{B1}) in the paper\ when $v=0,q=\frac{1}{2}$ and $\mu ,\sigma
=1,$
\end{case}

\begin{case}
generalized of extended beta function in Lee et al. (\cite{LRPK},\textit{\
pp. 189, equations (1.13)}) when $v=0,q=\frac{1}{2}$ and $\mu ,\sigma =m,$
\end{case}

\begin{case}
function (\ref{B4}) when $q=2\alpha -\beta +\frac{1}{2},v=\beta -\alpha -%
\frac{1}{2}$ and $\mu =\rho ;\sigma =\lambda .$
\end{case}

\begin{case}
function (\ref{P1}) in the paper by using the relation $K_{v}(x)=\frac{\pi
\exp \left( -x\right) }{2\sin v\pi }\left( I_{-v}(\frac{1}{2};x)-I_{v}(\frac{%
1}{2};x)\right) $ where $v\notin 
%TCIMACRO{\U{2124} }%
%BeginExpansion
\mathbb{Z}
%EndExpansion
$.
\end{case}

Consequently, we use the function (\ref{E1}) to extend the hypergeometric
functions and beta functions as follows:

\begin{eqnarray}
F_{v,q;p}^{(\mu ,\sigma )}(a,b;c;z) &=&\sum_{n=0}^{\infty }(a)_{n}\frac{%
B_{v,q}^{(\mu ,\sigma )}(b+n,c-b;p)}{B(b,c-b)}\frac{z^{n}}{n!},  \label{F1}
\\
(\func{Re}(p) &>&0,\text{ }\left\vert z\right\vert <1,\text{ }\min \{\func{Re%
}\left( v+q+\frac{1}{2}\right) ,\func{Re}\left( 2v+q+\frac{3}{2}\right) \},%
\text{ }  \notag \\
\func{Re}(\mu ),\func{Re}(\sigma ) &\geq &0,\func{Re}(c)>\func{Re}(b)>0,%
\func{Re}(a)>0)  \notag
\end{eqnarray}%
and

\begin{eqnarray}
\Phi _{v,q;p}^{(\mu ,\sigma )}(b;c;z) &=&\sum_{n=0}^{\infty }\frac{%
B_{v,q}^{(\mu ,\sigma ))}(b+n,c-b;p)}{B(b,c-b)}\frac{z^{n}}{n!},  \label{F2}
\\
(\func{Re}(p) &>&0,\text{ }\min \{\func{Re}\left( v+q+\frac{1}{2}\right) ,%
\func{Re}\left( 2v+q+\frac{3}{2}\right) \},  \notag \\
\func{Re}(\mu ),\func{Re}(\sigma ) &\geq &0,\func{Re}(c)>\func{Re}(b)>0,%
\func{Re}(a)>0).  \notag
\end{eqnarray}

\begin{theorem}
\bigskip The special functions (\ref{F1}) and (\ref{F2}) , respectively, has
the following integral representation%
\begin{eqnarray*}
{\small F}_{v,q;p}^{(\mu ,\sigma )}{\small (a,b;c;z)} &{\small =}&\text{$%
\frac{\sqrt{\frac{2}{\pi }}}{B(b,c-b)}$}\int_{0}^{1}{\small t}^{b}{\small %
(1-t)}^{c-b}{\small (1-zt)}^{-a}{\small I}_{v+\frac{1}{2}}{\small (q;}\frac{%
-p}{t^{\mu }\left( 1-t\right) ^{\sigma }}{\small )dt,} \\
(\func{Re}(p) &>&0,\text{ }\left\vert \arg (1-z)\right\vert <\pi ,\text{ }%
\func{Re}(\mu ),\func{Re}(\sigma )\geq 0,\func{Re}(c)>\func{Re}(b)>0, \\
\func{Re}(v+q) &>&0,\func{Re}(v)>\frac{-1}{2},\func{Re}(a)>0).
\end{eqnarray*}%
and
\end{theorem}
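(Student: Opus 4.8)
The plan is to start from the series definition of $F_{v,q;p}^{(\mu,\sigma)}(a,b;c;z)$ in (\ref{F1}) and insert the integral form of the extended beta-hypergeometric function. Since Definition (\ref{E1}) gives
\begin{equation*}
B_{v,q}^{(\mu,\sigma)}(b+n,c-b;p)=\sqrt{\frac{2}{\pi}}\int_{0}^{1}t^{b+n}(1-t)^{c-b}I_{v+\frac{1}{2}}\Bigl(q;\frac{-p}{t^{\mu}(1-t)^{\sigma}}\Bigr)dt,
\end{equation*}
substituting this into (\ref{F1}) and pulling the common factors $\sqrt{2/\pi}/B(b,c-b)$ outside the summation produces a double expression: a sum over $n$ of terms $(a)_{n}z^{n}/n!$ multiplied by an integral whose integrand carries the extra power $t^{n}$.

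The key step is to interchange the order of summation and integration, so that the factor depending on $n$ inside the integral becomes
\begin{equation*}
\sum_{n=0}^{\infty}(a)_{n}\frac{(zt)^{n}}{n!}=(1-zt)^{-a},
\end{equation*}
which is exactly the binomial (generalized Newton) series, valid for $|zt|<1$. After this recognition the integrand collapses to $t^{b}(1-t)^{c-b}(1-zt)^{-a}I_{v+\frac{1}{2}}(q;-p/(t^{\mu}(1-t)^{\sigma}))$, and one reads off the claimed integral representation. The companion statement for $\Phi_{v,q;p}^{(\mu,\sigma)}(b;c;z)$ in (\ref{F2}) follows by the identical manipulation; since its series lacks the Pochhammer factor $(a)_{n}$, the inner sum becomes $\sum_{n\ge0}(zt)^{n}/n!=e^{zt}$, and the integrand acquires a factor $e^{zt}$ in place of $(1-zt)^{-a}$.

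The main obstacle is justifying the term-by-term integration. For $|z|<1$ and $t\in[0,1]$ one has $|zt|<1$, so the binomial series converges, and I would bound the tail uniformly by dominating $|(a)_{n}(zt)^{n}/n!|$ by a convergent majorant independent of $t$ together with the boundedness of the $I_{v+\frac{1}{2}}$-weight on $(0,1)$ under the stated parameter conditions $\operatorname{Re}(c)>\operatorname{Re}(b)>0$, $\operatorname{Re}(v+q)>0$, $\operatorname{Re}(v)>-\tfrac{1}{2}$. This permits an appeal to uniform convergence (or dominated convergence) to swap sum and integral. Finally, to reach the full range $|\arg(1-z)|<\pi$ rather than merely $|z|<1$, I would note that the right-hand integral is an analytic function of $z$ on the cut plane, so the identity established on the disk extends by the principle of analytic continuation.
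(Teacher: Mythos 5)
Your proposal is correct and follows essentially the same route as the paper: substitute the integral representation (\ref{E1}) with $x\rightarrow b+n$, $y\rightarrow c-b$ into the series (\ref{F1}), interchange summation and integration, and sum the binomial series $\sum_{n\geq 0}(a)_{n}(zt)^{n}/n!=(1-zt)^{-a}$ (respectively $e^{zt}$ for $\Phi_{v,q;p}^{(\mu,\sigma)}$). Your added remarks on justifying the interchange and on extending from $|z|<1$ to $|\arg(1-z)|<\pi$ by analytic continuation supply details the paper only asserts.
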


\begin{eqnarray}
\Phi _{v,q;p}^{(\mu ,\sigma )}(b;c;z) &=&\frac{\sqrt{\frac{2}{\pi }}}{%
B(b,c-b)}\int_{0}^{1}t^{b}(1-t)^{c-b}e^{zt}I_{v+\frac{1}{2}}(q;\frac{-p}{%
t^{\mu }\left( 1-t\right) ^{\sigma }})dt,  \label{F2I} \\
(\func{Re}(p) &>&0,\text{ }\func{Re}(\mu ),\func{Re}(\sigma )\geq 0,\func{Re}%
(c)>\func{Re}(b)>0,  \notag \\
\func{Re}(v+q) &>&0,\func{Re}(v)>\frac{-1}{2},\func{Re}(a)>0).
\end{eqnarray}

\begin{proof}
\bigskip Substituting the function (\ref{E1}) with $x\rightarrow
b+n,y\rightarrow c-b$ into function (\ref{F1}), we have after interchanging
the order of summation and integration which is guaranteed%
\begin{eqnarray*}
F_{v,q;p}^{(\mu ,\sigma )}(a,b;c;z) &=&\frac{\sqrt{\frac{2}{\pi }}}{B(b,c-b)}%
\int_{0}^{1}t^{b}(1-t)^{c-b}I_{v+\frac{1}{2}}(q;\frac{-p}{t^{\mu }\left(
1-t\right) ^{\sigma }})\sum_{n=0}^{\infty }(a)_{n}\frac{\left( zt\right) ^{n}%
}{n!}dt \\
&=&\frac{\sqrt{\frac{2}{\pi }}}{B(b,c-b)}%
\int_{0}^{1}t^{b}(1-t)^{c-b}(1-zt)^{-a}I_{v+\frac{1}{2}}(q;\frac{-p}{t^{\mu
}\left( 1-t\right) ^{\sigma }})dt,
\end{eqnarray*}%
where $\left( 1-zt\right) ^{-a}=\sum_{n=0}^{\infty }(a)_{n}\frac{\left(
zt\right) ^{n}}{n!},\forall $ $\left\vert zt\right\vert <1.$ Similarly, from
the definitions of the functions (\ref{E1}) and (\ref{F2}), we can derive
the integral representation (\ref{F2I}) with $\exp (zt)=\sum_{n=0}^{\infty }%
\frac{\left( zt\right) ^{n}}{n!}.$
\end{proof}

Also, it can be easily seen that the new extensions (\ref{F1}) and (\ref{F2I}%
) reduce to the following special functions as\newline

\begin{case}
\textit{extended Gauss hypergeometric function and extended confluent
hypergeometric function in Lee et al. (\cite{LRPK}, pp. 189, Equations.
(1.11) and (1.12)]) respectively when $\upsilon =0$, $q=\frac{1}{2}$ and $%
\mu =\sigma =1$.}
\end{case}

\begin{case}
new \textit{generalized beta function in \"{O}zergin et al. (\cite{OMA}; pp.
4607, Equations. (11)) when }$q=2\alpha -\beta +\frac{1}{2},v=\beta -\alpha -%
\frac{1}{2}$ and\textit{\ $\mu =\sigma =1$.}
\end{case}

\begin{remark}
An interesting generalization of extension of gamma function and generalized
gamma function given together in the paper (\textit{\cite{OMA})} can be
considered as%
\begin{eqnarray*}
\Gamma _{p}^{v,q}\left( x\right) &:&=\int_{0}^{1}t^{x-1}I_{v+\frac{1}{2}%
}(q;-(t+\frac{p}{t})dt, \\
(\func{Re}(p) &>&0,\func{Re}(x)>0,\func{Re}(v+q)>-\frac{1}{2},\func{Re}%
(2v+q)>-\frac{3}{2}).
\end{eqnarray*}
\end{remark}

\subsection{The Mellin and Laplace Transforms}

In this section, we derive the Mellin and Laplace transforms of extended
modified Bessel and extended beta-hypergeometric functions. The necessary
conditions for their existences can be followed through existences of the
special functions appearing in their respective formulae.

\begin{theorem}
The Mellin transform of 
\begin{eqnarray*}
M\left[ I_{v}(q;x);s\right] &:&=\int_{0}^{\infty }x^{s-1}I_{v}(q;x)dx \\
&:&=\frac{\left( -1\right) ^{v+s-1}2^{q-s-\frac{1}{2}}\Gamma \left(
v+s\right) \Gamma \left( q-s\right) }{\sqrt{\pi }\Gamma \left( q+v-s+\frac{1%
}{2}\right) }
\end{eqnarray*}%
whenever integral exists.

\begin{proof}
Assume that the Mellin transform of $I_{v}(q;x)$ exists. Then, 
\begin{eqnarray*}
M\left[ I_{v}(q;x);s\right] &:&=\int_{0}^{\infty }x^{s-1}I_{v}(q;x)dx \\
&:&=\frac{2^{v+q-\frac{1}{2}}}{\sqrt{\pi }\Gamma \left( v+\frac{1}{2}\right) 
}\int_{0}^{\infty }x^{s-1}\int_{0}^{1}x^{v}t^{\upsilon +q-1}\left(
1-t\right) ^{\upsilon -\frac{1}{2}}\exp (2xt)\ dtdx.
\end{eqnarray*}%
By using uniform convergency of the integration with substitutions $\sigma
=-2xt$ and $\lambda =t$, we have 
\begin{equation*}
\int_{0}^{\infty }x^{s-1}I_{v}(q;x)dx=\frac{2^{q-s-\frac{1}{2}}\left(
-1\right) ^{v+s-1}}{\sqrt{\pi }\Gamma \left( v+\frac{1}{2}\right) }%
\int_{0}^{1}\lambda ^{q-s-1}\left( 1-t\right) ^{\upsilon -\frac{1}{2}%
}d\lambda \int_{0}^{\infty }\sigma ^{v+s-1}\exp (-\sigma )\ d\sigma .
\end{equation*}%
Hence,%
\begin{eqnarray*}
\int_{0}^{\infty }x^{s-1}I_{v}(q;x)dx &=&\frac{2^{q-s-\frac{1}{2}}\left(
-1\right) ^{v+s-1}\Gamma \left( v+s\right) }{\sqrt{\pi }\Gamma \left( v+%
\frac{1}{2}\right) }B\left( q-s,v+\frac{1}{2}\right) \\
&=&\frac{2^{q-s-\frac{1}{2}}\left( -1\right) ^{v+s-1}\Gamma \left(
v+s\right) \Gamma \left( q-s\right) }{\sqrt{\pi }\Gamma \left( q+v-s+\frac{1%
}{2}\right) }.
\end{eqnarray*}
\end{proof}
\end{theorem}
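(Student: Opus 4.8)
The plan is to start from the integral representation (\ref{eq.7}) of $I_{v}(q;x)$, insert it into the definition of the Mellin transform, and reduce the resulting double integral to a product of a Gamma integral and a Beta integral. Writing $\left(\tfrac{x}{2}\right)^{v}2^{2v+q-\frac12}=x^{v}2^{v+q-\frac12}$, the substitution gives
\begin{equation*}
M[I_{v}(q;x);s]=\frac{2^{v+q-\frac12}}{\sqrt{\pi}\,\Gamma(v+\frac12)}\int_{0}^{\infty}\int_{0}^{1}x^{s+v-1}t^{v+q-1}(1-t)^{v-\frac12}\exp(2xt)\,dt\,dx .
\end{equation*}
First I would interchange the order of integration (the step the author flags as ``guaranteed'' via uniform convergence) so that the $x$-integration can be carried out with $t$ held fixed.

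The inner integral is $\int_{0}^{\infty}x^{s+v-1}\exp(2xt)\,dx$. Because the exponent grows, this does not converge classically; I would evaluate it formally through the substitution $\sigma=-2xt$ (equivalently, by analytically continuing the identity $\int_{0}^{\infty}x^{a-1}e^{-px}\,dx=\Gamma(a)p^{-a}$ to $p=-2t$). This yields $\Gamma(v+s)$ together with the factor $(-2t)^{-(v+s)}=(-1)^{v+s-1}2^{-(v+s)}t^{-(v+s)}$, where the precise power of $-1$ is fixed by the branch convention the author records as $(-1)^{v+s-1}$. The constant $2^{-(v+s)}$ combines with the prefactor to give $2^{v+q-\frac12}\cdot 2^{-(v+s)}=2^{q-s-\frac12}$, while $t^{-(v+s)}$ is absorbed into the surviving $t$-integral.

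What remains is a Beta-type integral in $t$: the powers combine as $t^{v+q-1}\cdot t^{-(v+s)}=t^{q-s-1}$, so
\begin{equation*}
\int_{0}^{1}t^{q-s-1}(1-t)^{v-\frac12}\,dt=B\!\left(q-s,\,v+\tfrac12\right)=\frac{\Gamma(q-s)\,\Gamma(v+\frac12)}{\Gamma(q+v-s+\frac12)} .
\end{equation*}
Multiplying the three pieces, the $\Gamma(v+\frac12)$ from the Beta integral cancels the $\Gamma(v+\frac12)$ in the denominator of the prefactor, and collecting $\sqrt{\pi}$, the powers of $2$, the sign, and $\Gamma(v+s)$ produces exactly the claimed $\frac{(-1)^{v+s-1}2^{q-s-\frac12}\Gamma(v+s)\Gamma(q-s)}{\sqrt{\pi}\,\Gamma(q+v-s+\frac12)}$.

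The main obstacle is not the algebra but the analytic bookkeeping around the divergent inner integral: one must justify both the interchange of integration and the formal evaluation $\int_{0}^{\infty}x^{v+s-1}e^{2xt}\,dx=\Gamma(v+s)(-2t)^{-(v+s)}$ as an analytic continuation, and then commit to a single branch of $(-1)^{v+s-1}$ (and of $(-2t)^{-(v+s)}$) so that the subsequent Beta integral over $t\in(0,1)$ stays consistent. The region of validity of the final expression is then read off from the Gamma factors, namely $\mathrm{Re}(v+s)>0$ and $\mathrm{Re}(q-s)>0$, which delimit the strip in $s$ on which the transform is defined.
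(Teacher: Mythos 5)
Your proposal follows essentially the same route as the paper's proof: insert the integral representation (\ref{eq.7}), interchange the order of integration, use the substitution $\sigma=-2xt$ to produce $\Gamma(v+s)$ together with the factor $(-2t)^{-(v+s)}$, and evaluate the remaining $t$-integral as $B\left(q-s,v+\tfrac{1}{2}\right)$. You are in fact somewhat more candid than the paper about the fact that the inner $x$-integral diverges classically and must be read as an analytic continuation with a chosen branch of $(-1)^{v+s-1}$ — a point the paper passes over with an appeal to ``uniform convergency.''
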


In the paper (\textit{\cite{ACJ}},pp. 410, equation (3.3)), the Mellin
transform of the hypergeometric function $_{1}F_{1}\left( \alpha ,\beta
,-b\right) $ is used as%
\begin{equation}
\int_{0}^{\infty }b^{s-1}\text{ }_{1}F_{1}\left( \alpha ,\beta ,-b\right) db=%
\frac{\Gamma \left( \alpha -s\right) \Gamma \left( \beta \right) \Gamma
\left( s\right) }{\Gamma \left( \alpha \right) \Gamma \left( \beta -s\right) 
}.  \label{M1F1}
\end{equation}%
By using Mellin transform of $I_{v}(q;x),$ the Mellin transform of $_{1}F_{1}
$ can easily be showed by the following corollary.

\begin{corollary}
From the Mellin transform of $I_{v}(q;x),$ we can easily derive the Mellin
transform of $_{1}F_{1}\left( \alpha ,\beta ,-p\right) $ as 
\begin{equation*}
\int_{0}^{\infty }p^{s-1}\text{ }_{1}F_{1}\left( \alpha ,\beta ,-p\right) dp=%
\frac{\Gamma \left( \alpha -s\right) \Gamma \left( \beta \right) \Gamma
\left( s\right) }{\Gamma \left( \alpha \right) \Gamma \left( \beta -s\right) 
}.
\end{equation*}

\begin{proof}
Considering the Mellin transform of $I_{v}(q;x)$ with relation (\ref{I-F}),
we have%
\begin{equation*}
\int_{0}^{\infty }{\scriptsize p}^{s-1}\left[ \frac{\left( p\right)
^{\upsilon }\ 2^{\upsilon +q-\frac{1}{2}}\ \Gamma (\upsilon +q)}{\sqrt{\pi }%
\ \Gamma (2\upsilon +q+\frac{1}{2})}{_{1}F_{1}}\left( \upsilon +q,2\upsilon
+q+\frac{1}{2},2p\right) \right] {\scriptsize dp=}\frac{2^{q-s-\frac{1}{2}%
}\left( -1\right) ^{v+s-1}\Gamma \left( v+s\right) \Gamma \left( q-s\right) 
}{\sqrt{\pi }\Gamma \left( q+v-s+\frac{1}{2}\right) }.
\end{equation*}%
If we consider the substitutions $p\rightarrow -\frac{p}{2},\alpha
=v+q,\beta =2v+q+\frac{1}{2}$ for above integration, we have%
\begin{equation*}
\int_{0}^{\infty }\frac{p^{(s+v)-1}\Gamma \left( \alpha \right) 2^{q-s-\frac{%
1}{2}}\left( -1\right) ^{v+s-1}}{\sqrt{\pi }\Gamma \left( \beta \right) }%
\text{ }_{1}F_{1}\left( \alpha ,\beta ,-p\right) dp=\frac{2^{q-s-\frac{1}{2}%
}\left( -1\right) ^{v+s-1}\Gamma \left( v+s\right) \Gamma \left( q-s\right) 
}{\sqrt{\pi }\Gamma \left( q+v-s+\frac{1}{2}\right) }
\end{equation*}%
which gives Mellin transform (\ref{M1F1}).
\end{proof}
\end{corollary}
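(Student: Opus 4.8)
The plan is to use exactly the two facts that the corollary's phrasing advertises: the explicit Mellin transform of $I_{v}(q;x)$ from the preceding theorem and the representation (\ref{I-F}) of $I_{v}(q;x)$ through ${_{1}F_{1}}$. First I would insert (\ref{I-F}) into the definition of the Mellin transform. Using the identity $\left(\tfrac{p}{2}\right)^{v}2^{2v+q-\frac{1}{2}}=p^{v}2^{v+q-\frac{1}{2}}$ to collapse the prefactor, this gives
\begin{equation*}
\int_{0}^{\infty }p^{s-1}I_{v}(q;p)\,dp=\frac{2^{v+q-\frac{1}{2}}\,\Gamma (v+q)}{\sqrt{\pi }\,\Gamma (2v+q+\frac{1}{2})}\int_{0}^{\infty }p^{s+v-1}\,{_{1}F_{1}}\!\left( v+q,2v+q+\tfrac{1}{2},2p\right) dp .
\end{equation*}
Since the left-hand side is already evaluated in the theorem, the whole task reduces to identifying the integral on the right as a Mellin transform of ${_{1}F_{1}}$ in the canonical argument $-p$.

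Next I would carry out the (formal, complex) substitution $p\rightarrow -\tfrac{p}{2}$, which sends $2p$ to $-p$ and hence yields ${_{1}F_{1}}(\alpha ,\beta ,-p)$ with the identifications $\alpha =v+q$ and $\beta =2v+q+\tfrac{1}{2}$. This substitution produces powers of $2$ together with a branch factor of the form $(-1)^{v+s-1}$ coming from $p^{s+v-1}$ and the Jacobian. The key observation is that these same factors $2^{q-s-\frac{1}{2}}(-1)^{v+s-1}$ already sit in the value of the Mellin transform of $I_{v}(q;p)$ on the other side of the equation, so they cancel identically. After dividing out the common prefactor $\tfrac{\Gamma (\alpha )\,2^{q-s-\frac{1}{2}}(-1)^{v+s-1}}{\sqrt{\pi }\,\Gamma (\beta )}$, one is left with
\begin{equation*}
\int_{0}^{\infty }p^{(s+v)-1}\,{_{1}F_{1}}(\alpha ,\beta ,-p)\,dp=\frac{\Gamma (\beta )\,\Gamma (v+s)\,\Gamma (q-s)}{\Gamma (\alpha )\,\Gamma (q+v-s+\frac{1}{2})} .
\end{equation*}

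The final step is purely a relabeling of the Mellin parameter. Setting $\sigma =s+v$ and using $\alpha =v+q$, $\beta =2v+q+\tfrac{1}{2}$, one verifies the three matchings $v+s=\sigma$, $q-s=\alpha -\sigma$, and $q+v-s+\tfrac{1}{2}=\beta -\sigma$, so the right-hand side becomes $\tfrac{\Gamma (\sigma )\,\Gamma (\alpha -\sigma )\,\Gamma (\beta )}{\Gamma (\alpha )\,\Gamma (\beta -\sigma )}$; renaming $\sigma$ back to $s$ reproduces exactly (\ref{M1F1}). I expect the main difficulty to be bookkeeping rather than conceptual: one must track the powers of $2$ and the branch factors $(-1)^{v+s-1}$ faithfully through the complex substitution and confirm they coincide on both sides, and one must keep the shift $\sigma =s+v$ consistent so that all three gamma arguments align. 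Convergence of every integral involved is inherited from the hypotheses guaranteeing existence of the Mellin transform of $I_{v}(q;x)$ and of the classical transform (\ref{M1F1}).
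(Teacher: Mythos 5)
Your proposal is correct and follows essentially the same route as the paper's own proof: insert the relation (\ref{I-F}) into the Mellin integral, perform the formal substitution $p\rightarrow -\frac{p}{2}$ with the identifications $\alpha =v+q$, $\beta =2v+q+\frac{1}{2}$, cancel the common factors $2^{q-s-\frac{1}{2}}\left( -1\right) ^{v+s-1}$ against those in the known value of $M\left[ I_{v}(q;x);s\right]$, and relabel the Mellin variable. Your explicit check of the shift $\sigma =s+v$ and of the three gamma-argument matchings $v+s=\sigma$, $q-s=\alpha -\sigma$, $q+v-s+\frac{1}{2}=\beta -\sigma$ merely spells out a step the paper leaves implicit.
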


\begin{theorem}
The Mellin transform of 
\begin{eqnarray*}
M\left[ B_{v,q}^{\left( \mu ,\sigma \right) }(x,y;p);s\right] 
&:&=\int_{0}^{\infty }p^{s-1}B_{v,q}^{\left( \mu ,\sigma \right) }(x,y;p)dp
\\
&:&=\frac{2^{q-s}}{\pi }\frac{\left( -1\right) ^{v}\Gamma \left( v+s\right)
\Gamma \left( q-s\right) }{\Gamma \left( q+v-s+\frac{1}{2}\right) }B\left(
x+\mu s+1,y+\sigma s+1\right) 
\end{eqnarray*}%
whenever integral exists.

\begin{proof}
In the light of Mellin transform of $I_{v}(q;x),$ the Mellin transform of $%
B_{v,q}^{\left( \mu ,\sigma \right) }(x,y;p)$ can be represented as 
\begin{equation*}
M\left[ B_{v,q}^{\left( \mu ,\sigma \right) }(x,y;p);s\right] =\sqrt{\frac{2%
}{\pi }}\int_{0}^{1}t^{x}\left( 1-t\right) ^{y}\int_{0}^{\infty }p^{s-1}%
\text{ }I_{v+\frac{1}{2}}(q;-\frac{p}{t^{\mu }\left( 1-t\right) ^{\sigma }}%
)dpdt.
\end{equation*}%
Let $p=\Theta \left( t^{\mu }\left( 1-t\right) ^{\sigma }\right) $ and $%
\lambda =t$ $\left( dp=d\Theta \left( t^{\mu }\left( 1-t\right) ^{\sigma
}\right) \text{ and }d\lambda =dt\right) .$ Then, 
\begin{eqnarray*}
M\left[ B_{v,q}^{\left( \mu ,\sigma \right) }(x,y;p);s\right]  &=&\sqrt{%
\frac{2}{\pi }}\int_{0}^{1}\lambda ^{x+\mu s}\left( 1-\lambda \right)
^{y+\sigma s}d\lambda \left[ \frac{2^{q-s-\frac{1}{2}}\left( -1\right)
^{v}\Gamma \left( v+s\right) \Gamma \left( q-s\right) }{\sqrt{\pi }\Gamma
\left( q+v-s+\frac{1}{2}\right) }\right]  \\
&=&\sqrt{\frac{2}{\pi }}\left[ \frac{2^{q-s-\frac{1}{2}}\left( -1\right)
^{v}\Gamma \left( v+s\right) \Gamma \left( q-s\right) }{\sqrt{\pi }\Gamma
\left( q+v-s+\frac{1}{2}\right) }\right] \int_{0}^{1}\lambda ^{x+\mu
s}\left( 1-\lambda \right) ^{y+\sigma s}d\lambda  \\
&=&\frac{\sqrt{2}}{\pi }\frac{2^{q-s-\frac{1}{2}}\left( -1\right) ^{v}\Gamma
\left( v+s\right) \Gamma \left( q-s\right) }{\Gamma \left( q+v-s+\frac{1}{2}%
\right) }B\left( x+\mu s+1,y+\sigma s+1\right) .
\end{eqnarray*}
\end{proof}
\end{theorem}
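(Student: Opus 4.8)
The plan is to reduce this computation to the Mellin transform of $I_{v}(q;x)$ established in the previous theorem. I would begin by inserting the defining integral (\ref{E1}) of $B_{v,q}^{(\mu,\sigma)}(x,y;p)$ into the Mellin transform, obtaining the double integral
\[
M\left[ B_{v,q}^{(\mu,\sigma)}(x,y;p);s\right] = \sqrt{\frac{2}{\pi}}\int_{0}^{\infty}p^{s-1}\int_{0}^{1}t^{x}(1-t)^{y}\,I_{v+\frac{1}{2}}\!\left(q;\frac{-p}{t^{\mu}(1-t)^{\sigma}}\right)dt\,dp.
\]
Under the stated parameter restrictions the integrand is absolutely integrable over $(0,\infty)\times(0,1)$, so Fubini's theorem lets me interchange the two integrations and carry out the $p$-integration first for each fixed $t$.

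For fixed $t$, the key step is the substitution $p=\Theta\,t^{\mu}(1-t)^{\sigma}$, which collapses the argument of the Bessel factor to $-\Theta$ and extracts a clean power of $t^{\mu}(1-t)^{\sigma}$. Concretely, the inner integral becomes $\left[t^{\mu}(1-t)^{\sigma}\right]^{s}$ times $\int_{0}^{\infty}\Theta^{s-1}I_{v+\frac{1}{2}}(q;-\Theta)\,d\Theta$, and this last integral is precisely the Mellin transform of the extended modified Bessel function computed earlier. Substituting that value back in leaves the single integral
\[
\int_{0}^{1}t^{x+\mu s}(1-t)^{y+\sigma s}\,dt = B(x+\mu s+1,\,y+\sigma s+1),
\]
a Beta function by definition, valid precisely under the conditions $\func{Re}(x+\mu s)>-1$ and $\func{Re}(y+\sigma s)>-1$. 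Collecting prefactors, the constant $\sqrt{2/\pi}$ multiplied by the constant $2^{q-s-\frac{1}{2}}/\sqrt{\pi}$ coming from the Mellin transform of the Bessel factor simplifies to $2^{q-s}/\pi$, which produces the asserted closed form.

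The step I expect to be the main obstacle is the careful bookkeeping of the constant, sign, and gamma-index factors that arise once the Mellin transform of $I_{v+\frac{1}{2}}(q;\cdot)$ is inserted, especially because the argument $-\Theta$ is negative and the earlier Mellin formula already carries a factor $(-1)^{v+s-1}$ that must be tracked through the analytic continuation. A secondary technical point is verifying the absolute convergence needed to justify interchanging the order of integration; this is where the hypotheses $\func{Re}(p)>0$, $\mu,\sigma\ge 0$, and the restrictions on $v$, $q$, $s$ genuinely enter, and it should be confirmed before Fubini is invoked rather than merely assumed.
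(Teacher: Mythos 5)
Your proposal follows essentially the same route as the paper: insert the integral definition of $B_{v,q}^{(\mu,\sigma)}(x,y;p)$, interchange the order of integration, substitute $p=\Theta\,t^{\mu}(1-t)^{\sigma}$ so that the inner integral becomes the previously computed Mellin transform of the extended modified Bessel function, and recognize the remaining $t$-integral as $B(x+\mu s+1,y+\sigma s+1)$, with the constants $\sqrt{2/\pi}\cdot 2^{q-s-\frac12}/\sqrt{\pi}=2^{q-s}/\pi$ collected at the end. The bookkeeping issue you flag (the shift from order $v$ to $v+\frac12$ in the gamma factors and the sign $(-1)^{v+s-1}$ versus $(-1)^{v}$) is real, but the paper's own proof glosses over it in exactly the same way, so your argument matches the published one in both structure and level of rigor.
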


\begin{remark}
Since the special function (\ref{E1}) is extension of some recently
introduced special functions, the Mellin transform of these covered
functions can be derived.
\end{remark}

\begin{theorem}
The Laplace transform, if exists, of extended modified Bessel function is%
\begin{eqnarray}
L\{I_{v}(q;x);s\} &:&=\int_{0}^{\infty }e^{-sx}I_{v}(q;x)dx  \label{LI} \\
&:&=\frac{2^{q+v-\frac{1}{2}}\Gamma \left( v+q\right) \Gamma \left(
v+1\right) }{\sqrt{\pi }s^{v+1}\Gamma \left( q+2v+\frac{1}{2}\right) }%
F\left( v+1,v+q;2v+q+\frac{1}{2};\frac{2}{s}\right) .  \notag
\end{eqnarray}%
where $F(a,b;c;z)$ is Gauss hypergeometric function (see (\textit{\cite{NVG}}%
,pp.11, equation (2)).

\begin{proof}
Consider the Laplace transform of $I_{v}(q;x)$ 
\begin{equation*}
L\{I_{v}(q;x);s\}=\int_{0}^{\infty }e^{-sx}I_{v}(q;x)dx=\int_{0}^{\infty
}e^{-sx}\frac{\left( \frac{x}{2}\right) ^{\upsilon }2^{2\upsilon +q-\frac{1}{%
2}}}{\sqrt{\pi }\ \Gamma \left( \upsilon +\frac{1}{2}\right) }%
\int_{0}^{1}t^{\upsilon +q-1}\left( 1-t\right) ^{\upsilon -\frac{1}{2}}\exp
(2xt)\ \emph{d}tdx.
\end{equation*}%
By using uniform convergency of the integration, we have%
\begin{equation*}
\int_{0}^{\infty }e^{-sx}I_{v}(q;x)dx=\frac{2^{\upsilon +q-\frac{1}{2}}}{%
\sqrt{\pi }\ \Gamma \left( \upsilon +\frac{1}{2}\right) }\int_{0}^{1}t^{%
\upsilon +q-1}\left( 1-t\right) ^{\upsilon -\frac{1}{2}}\emph{d}%
t\int_{0}^{\infty }\left( x\right) ^{\upsilon }e^{x(-s+2t)}\ dx.(s>2t)
\end{equation*}%
By using the substitutions $x\rightarrow \frac{\sigma }{s-2t}$ and $\lambda
=t,$ we have 
\begin{eqnarray*}
\int_{0}^{\infty }e^{-sx}I_{v}(q;x)dx &=&\frac{2^{\upsilon +q-\frac{1}{2}}}{%
\sqrt{\pi }\ \Gamma \left( \upsilon +\frac{1}{2}\right) }\int_{0}^{1}\lambda
^{\upsilon +q-1}\left( 1-\lambda \right) ^{\upsilon -\frac{1}{2}}\left(
s-2\lambda \right) ^{-v-1}\emph{d}\lambda \int_{0}^{\infty }\left( \sigma
\right) ^{\upsilon }e^{-\sigma }\ d\sigma . \\
&=&\frac{2^{\upsilon +q-\frac{1}{2}}\Gamma \left( \upsilon +1\right) }{\sqrt{%
\pi }\ s^{v+1}\Gamma \left( \upsilon +\frac{1}{2}\right) }%
\int_{0}^{1}\lambda ^{\upsilon +q-1}\left( 1-\lambda \right) ^{\upsilon -%
\frac{1}{2}}\left( 1-\frac{2}{s}\lambda \right) ^{-v-1}\emph{d}\lambda .
\end{eqnarray*}%
Since 
\begin{equation*}
F\left( a,b;c;z\right) =\frac{1}{B(b,c-b)}\int_{0}^{1}t^{b-1}\left(
1-t\right) ^{c-b-1}\left( 1-z\lambda \right) ^{-a}\emph{d}t\text{ with }%
\left\vert \arg \left( 1-z\right) \right\vert <\pi ,
\end{equation*}%
then%
\begin{eqnarray*}
\int_{0}^{\infty }e^{-sx}I_{v}(q;x)dx &=&\frac{2^{\upsilon +q-\frac{1}{2}%
}\Gamma \left( \upsilon +1\right) }{\sqrt{\pi }s^{v+1}\ \Gamma \left(
\upsilon +\frac{1}{2}\right) }F(v+1,v+q;2v+q+\frac{1}{2};\frac{2}{s})\cdot
B(v+q,v+\frac{1}{2}) \\
&=&\frac{2^{\upsilon +q-\frac{1}{2}}\Gamma \left( \upsilon +1\right) }{\sqrt{%
\pi }\ s^{v+1}\Gamma \left( \upsilon +\frac{1}{2}\right) }F(v+1,v+q;2v+q+%
\frac{1}{2};\frac{2}{s})\frac{\Gamma \left( \upsilon +q\right) \Gamma \left(
\upsilon +\frac{1}{2}\right) }{\Gamma \left( 2\upsilon +q+\frac{1}{2}\right) 
}
\end{eqnarray*}%
which gives the formula (\ref{LI}).
\end{proof}
\end{theorem}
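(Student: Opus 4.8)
The plan is to start from the integral representation (\ref{eq.13}) of $I_{v}(q;x)$ and substitute it directly into the defining Laplace integral, so that
\begin{equation*}
L\{I_{v}(q;x);s\}=\frac{2^{2v+q-\frac{1}{2}}}{\sqrt{\pi}\,\Gamma\left(v+\frac{1}{2}\right)}\int_{0}^{\infty}e^{-sx}\left(\frac{x}{2}\right)^{v}\int_{0}^{1}t^{v+q-1}(1-t)^{v-\frac{1}{2}}e^{2xt}\,dt\,dx.
\end{equation*}
First I would interchange the order of the two integrations, which is permissible by absolute (uniform) convergence on the region $s>2$, and carry out the inner $x$-integral. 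Collecting the exponentials, the inner integral is $2^{-v}\int_{0}^{\infty}x^{v}e^{-(s-2t)x}\,dx$, which converges precisely when $s-2t>0$ for every $t\in[0,1]$, i.e. when $s>2$, and equals $2^{-v}\Gamma(v+1)(s-2t)^{-v-1}$ after the substitution $x\mapsto \sigma/(s-2t)$.

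Next I would extract the factor $s^{-v-1}$ by writing $(s-2t)^{-v-1}=s^{-v-1}\bigl(1-\tfrac{2}{s}t\bigr)^{-v-1}$, leaving the single $t$-integral
\begin{equation*}
\int_{0}^{1}t^{v+q-1}(1-t)^{v-\frac{1}{2}}\Bigl(1-\tfrac{2}{s}t\Bigr)^{-v-1}\,dt.
\end{equation*}
The key step is to recognize this as the Euler integral representation of a Gauss hypergeometric function: with the matching $b-1=v+q-1$, $c-b-1=v-\frac{1}{2}$, $-a=-(v+1)$ and $z=\frac{2}{s}$ one reads off $a=v+1$, $b=v+q$, $c=2v+q+\frac{1}{2}$, so the integral equals $B\!\left(v+q,v+\frac{1}{2}\right)F\!\left(v+1,v+q;2v+q+\frac{1}{2};\frac{2}{s}\right)$. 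Here the condition $\lvert\arg(1-\frac{2}{s})\rvert<\pi$ needed for this representation is compatible with the convergence requirement $s>2$ found above.

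Finally I would reassemble the constants. Replacing the Beta function by $B\!\left(v+q,v+\frac{1}{2}\right)=\Gamma(v+q)\Gamma\!\left(v+\frac{1}{2}\right)/\Gamma\!\left(2v+q+\frac{1}{2}\right)$ and combining with the prefactor $2^{2v+q-\frac{1}{2}}\Gamma(v+1)2^{-v}/\bigl(\sqrt{\pi}\,\Gamma(v+\frac{1}{2})s^{v+1}\bigr)$, the two $\Gamma(v+\frac{1}{2})$ factors cancel and the powers of $2$ collapse to $2^{v+q-\frac{1}{2}}$, yielding exactly the stated formula (\ref{LI}). I expect the only genuinely delicate point to be the justification of the interchange of integrations together with the tracking of the convergence domain $s>2$; the remaining work is the bookkeeping of Gamma factors and powers of $2$, which is routine.
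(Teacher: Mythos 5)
Your proposal is correct and follows essentially the same route as the paper's own proof: interchange the order of integration, evaluate the inner $x$-integral via the substitution $x\mapsto\sigma/(s-2t)$ to get $\Gamma(v+1)(s-2t)^{-v-1}$, factor out $s^{-v-1}$, and identify the remaining $t$-integral with the Euler representation of $F\left(v+1,v+q;2v+q+\frac{1}{2};\frac{2}{s}\right)$. Your explicit tracking of the convergence domain $s>2$ is in fact slightly more careful than the paper, which only records the pointwise condition $s>2t$.
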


As an particular case, the Laplace transform of $I_{v}(q;x)$ (\ref{LI}) for $%
v=0$ and $q=\frac{1}{2}$ gives 
\begin{equation*}
L\{I_{0}(\frac{1}{2};x);s\}=\frac{1}{\sqrt{s^{2}-2s}}.
\end{equation*}%
Consequently, Laplace transform of modified Bessel function of the first
kind for $v=0$ via Laplace transform of $I_{v}(q;x)$ can easily be obtained 
\begin{equation*}
L\{I_{0}\left( x\right) ;s\}=L\{e^{-x}I_{0}(\frac{1}{2};x);s\}=\frac{1}{%
\sqrt{\left( s+1\right) ^{2}-2\left( s+1\right) }}=\frac{1}{\sqrt{s^{2}-1}}.
\end{equation*}

\begin{corollary}
The Laplace transform of modified Bessel function is 
\begin{equation*}
L\{I_{v}\left( x\right) ;s\}=\frac{1}{2^{v}\left( s+1\right) ^{v+1}\Gamma
\left( q+2v+\frac{1}{2}\right) }F\left( v+1,v+\frac{1}{2};2v+1;\frac{2}{s+1}%
\right) .
\end{equation*}

\begin{proof}
Assume that Laplace transform of $I_{v}(q;x)$ exists and equals to $F\left(
s\right) $. Consequently,%
\begin{equation*}
L\{I_{v}\left( x\right) ;s\}=L\{e^{-x}I_{v}\left( \frac{1}{2},x\right)
;s\}=F\left( s+1\right) .
\end{equation*}%
By using formula (\ref{LI}) together with Legendre's duplication formula, we
derive the corresponding formula.
\end{proof}
\end{corollary}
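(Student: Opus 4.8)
The plan is to reduce the claim to the Laplace-transform formula (\ref{LI}) already established for $I_{v}(q;x)$, using only the elementary identity $I_{v}(\tfrac{1}{2};x)=e^{x}I_{v}(x)$ recorded just after the Definition. Rewriting it as $I_{v}(x)=e^{-x}I_{v}(\tfrac{1}{2};x)$ and applying the frequency-shift rule $L\{e^{-x}g(x);s\}=L\{g(x);s+1\}$ gives
\begin{equation*}
L\{I_{v}(x);s\}=L\{e^{-x}I_{v}(\tfrac{1}{2};x);s\}=L\{I_{v}(\tfrac{1}{2};x);s+1\}.
\end{equation*}
So it suffices to specialize (\ref{LI}) at $q=\tfrac{1}{2}$ and then shift the transform variable by one.

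First I would put $q=\tfrac{1}{2}$ in (\ref{LI}). Then the exponent $q+v-\tfrac{1}{2}$ becomes $v$, the hypergeometric parameter $2v+q+\tfrac{1}{2}$ becomes $2v+1$, and the denominator factor $\Gamma(q+2v+\tfrac{1}{2})$ becomes $\Gamma(2v+1)$, yielding
\begin{equation*}
L\{I_{v}(\tfrac{1}{2};x);s\}=\frac{2^{v}\,\Gamma(v+\tfrac{1}{2})\,\Gamma(v+1)}{\sqrt{\pi}\,s^{v+1}\,\Gamma(2v+1)}\,F\!\left(v+1,v+\tfrac{1}{2};2v+1;\frac{2}{s}\right).
\end{equation*}
The crucial simplification is Legendre's duplication formula, which with $z=v+\tfrac{1}{2}$ reads $\Gamma(v+\tfrac{1}{2})\,\Gamma(v+1)=2^{-2v}\sqrt{\pi}\,\Gamma(2v+1)$. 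Substituting this cancels $\sqrt{\pi}$ and $\Gamma(2v+1)$ and collapses the power of two to $2^{v}\cdot 2^{-2v}=2^{-v}$, leaving the clean intermediate form
\begin{equation*}
L\{I_{v}(\tfrac{1}{2};x);s\}=\frac{1}{2^{v}\,s^{v+1}}\,F\!\left(v+1,v+\tfrac{1}{2};2v+1;\frac{2}{s}\right).
\end{equation*}

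Finally I would replace $s$ by $s+1$, as required by the shift identity above, to obtain
\begin{equation*}
L\{I_{v}(x);s\}=\frac{1}{2^{v}\,(s+1)^{v+1}}\,F\!\left(v+1,v+\tfrac{1}{2};2v+1;\frac{2}{s+1}\right),
\end{equation*}
which is the asserted formula. I do not anticipate a genuine difficulty: the argument is essentially bookkeeping, the only substantive step being the pairing of the $q=\tfrac{1}{2}$ specialization with the duplication formula. The two points deserving a line of care are the convergence bookkeeping---multiplication by $e^{-x}$ shifts the abscissa of convergence by one, so the existence of $L\{I_{v}(\tfrac{1}{2};x);\cdot\}$ legitimizes the substitution $s\mapsto s+1$ on the shifted half-plane---and the observation that the duplication formula is precisely what absorbs the residual $\Gamma(q+2v+\tfrac{1}{2})$ factor, so it should not persist in the final expression.
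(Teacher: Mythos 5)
Your proof is correct and follows essentially the same route as the paper's: write $I_{v}(x)=e^{-x}I_{v}(\tfrac{1}{2};x)$, apply the frequency-shift rule $L\{e^{-x}g;s\}=L\{g;s+1\}$, specialize (\ref{LI}) at $q=\tfrac{1}{2}$, and simplify with Legendre's duplication formula. You are also right that the factor $\Gamma(q+2v+\tfrac{1}{2})$ in the corollary's displayed formula is spurious --- it is exactly what the duplication formula absorbs, and your version $\frac{1}{2^{v}(s+1)^{v+1}}F(v+1,v+\tfrac{1}{2};2v+1;\tfrac{2}{s+1})$ checks out against the paper's own special case $L\{I_{0}(x);s\}=1/\sqrt{s^{2}-1}$.
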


\section{\protect\bigskip Generalization of Fractional Derivatives}

\subsection{Extended Fractional Derivative via Extended Modified Bessel
Function}

\bigskip In this section, we introduce an interesting extended fractional
derivative which can be generalization of a large set of fractional
derivatives. Let $z>0$ then the new extension of Riemann-Liouville
fractional derivative $_{\mu ,\sigma }D_{v,q;z}^{\alpha ,\eta ,p}{f(z)}$ is
defined as follows: 
\begin{align}
& _{\mu ,\sigma }D_{v,q;z}^{\alpha ,\eta ,p}\left( {f(z)}\right) :=\frac{%
\sqrt{\frac{2}{\pi }}}{\Gamma (\alpha )}\int_{0}^{z}f(t)(z-t)^{\alpha
-1}t^{\eta }{}I_{v+\frac{1}{2}}(q;\frac{-pz^{^{\mu +\sigma }}}{t^{\mu
}\left( z-t\right) ^{\sigma }})dt,  \label{FD} \\
& (\min \{\func{Re}(\alpha )>0,\func{Re}(p),\func{Re}(\eta )>0,\func{Re}(v+q+%
\frac{1}{2})>0,\func{Re}(2v+q+\frac{3}{2})>0),\text{ }\mu ,\sigma \geq 0) 
\notag
\end{align}%
and $n-1<Re(\alpha )<n$ $(n=1,2,3,...)$.\newline

Now, we start with the extended fractional derivative of elementary function 
$f(z)=z^{\lambda }$.

\begin{corollary}
Let $\func{Re}(\eta +\lambda +\mu q)>-1$. Then 
\begin{equation}
_{\mu ,\sigma }D_{v,q;z}^{\alpha ,\eta ,p}(z^{\lambda })=\frac{z^{\eta
+\lambda +\alpha }}{\Gamma (\alpha )}B_{v,q}^{\left( \mu ,\sigma \right)
}(\eta +\lambda ,\alpha -1;p)  \notag
\end{equation}%
whenever the function $B_{v,q}^{\left( \mu ,\sigma \right) }$ exists.
\end{corollary}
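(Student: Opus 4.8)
The plan is to compute the fractional derivative directly from its definition (\ref{FD}) with $f(t)=t^{\lambda }$ and reduce the resulting integral to the extended beta-hypergeometric function (\ref{E1}) by a single scaling substitution. Starting from the definition, I would write
\[
{}_{\mu ,\sigma }D_{v,q;z}^{\alpha ,\eta ,p}(z^{\lambda })=\frac{\sqrt{\frac{2}{\pi }}}{\Gamma (\alpha )}\int_{0}^{z}t^{\eta +\lambda }(z-t)^{\alpha -1}I_{v+\frac{1}{2}}\left( q;\frac{-pz^{\mu +\sigma }}{t^{\mu }(z-t)^{\sigma }}\right) dt,
\]
so that the only task is to evaluate this integral.

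The key step is the substitution $t=zu$ (equivalently $u=t/z$), which carries the interval $[0,z]$ to $[0,1]$ and gives $dt=z\,du$. Under this change of variables each factor scales cleanly: $t^{\eta +\lambda }=z^{\eta +\lambda }u^{\eta +\lambda }$, $(z-t)^{\alpha -1}=z^{\alpha -1}(1-u)^{\alpha -1}$, and the argument of the Bessel factor simplifies because $t^{\mu }(z-t)^{\sigma }=z^{\mu +\sigma }u^{\mu }(1-u)^{\sigma }$, so that $\frac{z^{\mu +\sigma }}{t^{\mu }(z-t)^{\sigma }}=\frac{1}{u^{\mu }(1-u)^{\sigma }}$. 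Collecting the powers of $z$ --- namely $z^{\eta +\lambda }$, $z^{\alpha -1}$, and the single $z$ from $dt$ --- produces the overall prefactor $z^{\eta +\lambda +\alpha }$.

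What remains is
\[
\frac{z^{\eta +\lambda +\alpha }}{\Gamma (\alpha )}\sqrt{\frac{2}{\pi }}\int_{0}^{1}u^{\eta +\lambda }(1-u)^{\alpha -1}I_{v+\frac{1}{2}}\left( q;\frac{-p}{u^{\mu }(1-u)^{\sigma }}\right) du,
\]
and I would recognize the integral together with its $\sqrt{2/\pi }$ factor as exactly $B_{v,q}^{(\mu ,\sigma )}(\eta +\lambda ,\alpha -1;p)$ by comparing with definition (\ref{E1}), reading off $x=\eta +\lambda$ and $y=\alpha -1$. This immediately yields the asserted identity.

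The computation is essentially bookkeeping of exponents, so there is no serious conceptual obstacle; the only point requiring care is justifying that the integral converges, so that the substitution and the identification with $B_{v,q}^{(\mu ,\sigma )}$ are legitimate. This is precisely where the hypothesis $\func{Re}(\eta +\lambda +\mu q)>-1$ enters: it is the specialization $x=\eta +\lambda$ of the existence condition $\func{Re}(x+\mu q)>-1$ attached to (\ref{E1}), which guarantees that $B_{v,q}^{(\mu ,\sigma )}(\eta +\lambda ,\alpha -1;p)$ is well defined --- exactly the proviso appended to the statement.
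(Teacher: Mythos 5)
Your proposal is correct and follows exactly the paper's own argument: substitute $f(t)=t^{\lambda}$ into the definition, change variables via $t=zu$ to pull out the factor $z^{\eta+\lambda+\alpha}$, and identify the remaining integral over $[0,1]$ with $B_{v,q}^{(\mu,\sigma)}(\eta+\lambda,\alpha-1;p)$ from its definition. Your additional remark connecting the hypothesis $\func{Re}(\eta+\lambda+\mu q)>-1$ to the existence condition of the extended beta-hypergeometric function is a welcome clarification the paper leaves implicit.
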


\begin{proof}
Consider the fractional derivative (\ref{FD}), we get 
\begin{equation*}
_{\mu ,\sigma }D_{v,q;z}^{\alpha ,\eta ,p}(z^{\lambda })=\frac{\sqrt{\frac{2%
}{\pi }}}{\Gamma (\alpha )}\int_{0}^{z}(z-t)^{\alpha -1}t^{\eta +\lambda
}{}I_{v+\frac{1}{2}}(q;\frac{-pz^{^{\mu +\sigma }}}{t^{\mu }\left(
z-t\right) ^{\sigma }})dt.
\end{equation*}%
Taking $t=zu$, after a little simplification, gives 
\begin{align*}
_{\mu ,\sigma }D_{v,q;z}^{\alpha ,\eta ,p}(z^{\lambda })& =\frac{\sqrt{\frac{%
2}{\pi }}z^{\eta +\lambda +\alpha }}{\Gamma (\alpha )}\int_{0}^{1}(1-u)^{%
\alpha -1}u^{\eta +\lambda }{}I_{v+\frac{1}{2}}(q;\frac{-p}{u^{\mu }\left(
1-u\right) ^{\sigma }})du \\
& =\frac{z^{\eta +\lambda +\alpha }}{\Gamma (\alpha )}B_{v,q}^{\left( \mu
,\sigma \right) }(\eta +\lambda ,\alpha -1;p).
\end{align*}
\end{proof}

\begin{corollary}
\bigskip Let $\xi \neq 0$ and $\xi \in \mathbb{C}$. Then%
\begin{equation}
_{\mu ,\sigma }D_{v,q;z}^{\alpha ,\eta ,p}((z-\xi )^{r}):=\frac{(-\xi
)^{r}B(\eta ,\alpha -1)z^{\eta +\alpha }}{\Gamma (\alpha )}F_{v,q;p}^{(\mu
,\sigma )}(-r,\eta ;\eta +\alpha -1;\frac{z}{\xi })  \label{FP1}
\end{equation}%
whenever the function $F_{v,q;p}^{(\mu ,\sigma )}$ exists.
\end{corollary}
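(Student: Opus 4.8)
The plan is to reduce the computation to the power-law formula of the preceding corollary by expanding $(z-\xi)^{r}$ as a binomial series and applying the operator term by term. First I would factor out $\xi$ and write, for $\left\vert z/\xi \right\vert <1$,
\begin{equation*}
(z-\xi )^{r}=(-\xi )^{r}\left( 1-\frac{z}{\xi }\right) ^{r}=(-\xi )^{r}\sum_{n=0}^{\infty }(-r)_{n}\frac{1}{n!}\left( \frac{z}{\xi }\right) ^{n},
\end{equation*}
using the Pochhammer (binomial) expansion $(1-w)^{r}=\sum_{n\geq 0}(-r)_{n}w^{n}/n!$, which holds since $(1-w)^{-s}=\sum_{n\geq 0}(s)_{n}w^{n}/n!$ with $s=-r$.

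Next I would apply the extended fractional derivative ${}_{\mu ,\sigma }D_{v,q;z}^{\alpha ,\eta ,p}$ to this series. Because the operator is an integral against the kernel $(z-t)^{\alpha -1}t^{\eta }I_{v+\frac{1}{2}}(q;\cdot )$, I would interchange the summation with the integration and apply the operator to each monomial $z^{n}$ separately. Invoking the preceding corollary with $\lambda =n$ gives
\begin{equation*}
{}_{\mu ,\sigma }D_{v,q;z}^{\alpha ,\eta ,p}(z^{n})=\frac{z^{\eta +n+\alpha }}{\Gamma (\alpha )}B_{v,q}^{(\mu ,\sigma )}(\eta +n,\alpha -1;p),
\end{equation*}
so that collecting the constants yields
\begin{equation*}
{}_{\mu ,\sigma }D_{v,q;z}^{\alpha ,\eta ,p}((z-\xi )^{r})=\frac{(-\xi )^{r}z^{\eta +\alpha }}{\Gamma (\alpha )}\sum_{n=0}^{\infty }(-r)_{n}\frac{B_{v,q}^{(\mu ,\sigma )}(\eta +n,\alpha -1;p)}{n!}\left( \frac{z}{\xi }\right) ^{n}.
\end{equation*}

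Finally I would recognize this series as the extended Gauss hypergeometric function (\ref{F1}). Comparing with the definition of $F_{v,q;p}^{(\mu ,\sigma )}(a,b;c;z)$, the choice $a=-r$, $b=\eta $, and $c-b=\alpha -1$ (so $c=\eta +\alpha -1$) with argument $z/\xi $ matches the summand up to the factor $B(b,c-b)=B(\eta ,\alpha -1)$ occurring in the denominator of each term of (\ref{F1}); multiplying and dividing by $B(\eta ,\alpha -1)$ turns the bare sum into $B(\eta ,\alpha -1)\,F_{v,q;p}^{(\mu ,\sigma )}(-r,\eta ;\eta +\alpha -1;z/\xi )$, which is precisely the claimed expression (\ref{FP1}).

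The main obstacle is justifying the termwise action of the operator, i.e. the interchange of the infinite sum and the defining integral. I would handle this by observing that the binomial series converges uniformly on compact subsets of $\{\left\vert z/\xi \right\vert <1\}$ and that the kernel of ${}_{\mu ,\sigma }D_{v,q;z}^{\alpha ,\eta ,p}$ is integrable under the stated parameter restrictions (the same conditions guaranteeing existence of $B_{v,q}^{(\mu ,\sigma )}$, and hence of $F_{v,q;p}^{(\mu ,\sigma )}$), so that dominated convergence legitimizes the interchange; the clause ``whenever the function $F_{v,q;p}^{(\mu ,\sigma )}$ exists'' is exactly the hypothesis ensuring this. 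The remaining work—tracking the Pochhammer coefficients and the beta-factor bookkeeping—is routine.
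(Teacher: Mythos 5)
Your proof is correct, but it takes a genuinely different route from the paper's. The paper works directly inside the defining integral: it substitutes $t=uz$, factors $(t-\xi )^{r}=(-\xi )^{r}(1-\tfrac{z}{\xi }u)^{r}$, and then recognizes the resulting integral $\int_{0}^{1}u^{\eta }(1-u)^{\alpha -1}(1-\tfrac{z}{\xi }u)^{r}I_{v+\frac{1}{2}}(q;\cdot )\,du$ as the \emph{integral representation} of $F_{v,q;p}^{(\mu ,\sigma )}(-r,\eta ;\eta +\alpha -1;\tfrac{z}{\xi })$ established in the preceding theorem. You instead expand $(z-\xi )^{r}$ as a binomial series, push the operator through term by term via the $z^{\lambda }$ corollary, and match the result against the \emph{series definition} (\ref{F1}); your Pochhammer and beta-factor bookkeeping is accurate and lands on the stated formula. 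The trade-off: the paper's one-line substitution needs no interchange of limits and is valid wherever the integral representation of $F_{v,q;p}^{(\mu ,\sigma )}$ converges (essentially $\left\vert \arg (1-\tfrac{z}{\xi })\right\vert <\pi$), whereas your argument is confined to $\left\vert z/\xi \right\vert <1$ and requires the sum--integral interchange you flag (which is indeed justified, since $|t/\xi |\leq |z/\xi |<1$ for $t\in \lbrack 0,z]$ gives uniform convergence of the binomial series on the integration range). On the other hand, your route makes the reduction to the elementary power-function formula explicit and mirrors how the paper itself derives the integral representation of $F_{v,q;p}^{(\mu ,\sigma )}$ from its series, so the two proofs are two sides of the same computation; extending your conclusion to the full range of the corollary would require an appeal to analytic continuation in $z/\xi $.
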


\begin{proof}
Consider the fractional derivative (\ref{FD}), we get 
\begin{align*}
& D_{z}^{\mu ,\eta ,p}((z-\xi )^{r}) \\
& =\frac{\sqrt{\frac{2}{\pi }}}{\Gamma (\alpha )}\int_{0}^{z}(z-t)^{\alpha
-1}(t-\xi )^{r}t^{\eta }{}I_{v+\frac{1}{2}}(q;\frac{-pz^{^{\mu +\sigma }}}{%
t^{\mu }\left( z-t\right) ^{\sigma }})dt \\
& =\frac{\sqrt{\frac{2}{\pi }}(-\xi )^{r}z^{\eta +\alpha }}{\Gamma (\alpha )}%
\int_{0}^{1}(1-u)^{\alpha -1}(1-\frac{z}{\xi }u)^{r}u^{\eta }{}I_{v+\frac{1}{%
2}}(q;\frac{-p}{u^{\mu }\left( 1-u\right) ^{\sigma }})du\text{ }(t=uz) \\
& =\frac{(-\xi )^{r}B(\eta ,\alpha -1)z^{\eta +\alpha }}{\Gamma (\alpha )}%
F_{v,q;p}^{(\mu ,\sigma )}(-r,\eta ;\eta +\alpha -1;\frac{z}{\xi }).
\end{align*}
\end{proof}

\bigskip The special case of new extension (\ref{FD}) with $p\rightarrow
2p,\mu =\sigma =1;v=0,q=\frac{1}{2};\alpha =-\mu -\frac{1}{2},\eta =\frac{-1%
}{2}$ reduces the generalized Riemann-Liouville fractional derivative which
is defined by \"{O}zarslan et al (\cite{MO}) as 
\begin{align}
& D_{z}^{\mu ,\eta ,p}{f(z)}:=\frac{1}{\Gamma (-\mu )}%
\int_{0}^{z}f(t)(z-t)^{-\mu -1}exp\big(\frac{-pz^{2}}{t(z-t)})dt, \\
& (Re(\mu )<0,Re(p)>0).  \notag
\end{align}

\bigskip Also, the particular case $\mu =\sigma =0;v=0,q=\frac{1}{2};\alpha
=-\mu ,\eta =0$ for extended fractional derivative (\ref{FD}) reduces the
Riemann-Liouville fractional derivative%
\begin{align}
& D_{z}^{\mu }{f(z)}:=\frac{1}{\Gamma (-\mu )}\int_{0}^{z}f(t)(z-t)^{-\mu
-1}dt, \\
& (Re(\mu )<0).  \notag
\end{align}

It is also important to note that the extended fractional derivative (\ref%
{FD}) reduces to extended fractional derivative 
\begin{align}
& I_{z}^{\mu ,b}\left\{ {f(z)}\right\} :=\frac{1}{\Gamma (\mu )}%
\int_{0}^{z}f(t)(z-t)^{\mu -1}{}_{1}F_{1}\left( \gamma ,\beta ,-\frac{%
bz^{\rho +\lambda }}{t^{\rho }(z-t)^{\lambda }}\right) dt, \\
& (\rho >0,\lambda >0,\min \{\func{Re}(\gamma ),\func{Re}(\beta ),\func{Re}%
(\mu ),\func{Re}(b)\}>0)  \notag
\end{align}%
defined in (\textit{\cite{LMA}},pp.647) when $p\rightarrow \frac{b}{2},\mu
=\rho ,\sigma =\lambda ;v=0,q=2\gamma -\beta +\frac{1}{2},v=\beta -\gamma -%
\frac{1}{2};\alpha =\mu +\beta \lambda -\gamma \lambda -\frac{\lambda }{2}%
,\eta =\beta \rho -\gamma \rho -\frac{\rho }{2}.$

Finally, Katugampola in the paper (\textit{\cite{K}) }introduced a new
fractional integral operator given by,%
\begin{equation}
\left( ^{\rho }I_{a+}^{\alpha }f\right) \left( x\right) =\frac{\rho
^{1-\alpha }}{\Gamma \left( \alpha \right) }\int_{a}^{x}\frac{\tau ^{\rho
-1}f\left( \tau \right) }{\left( x^{\rho }-\tau ^{\rho }\right) ^{1-\alpha }}%
d\tau ,  \label{KFD}
\end{equation}%
which is generalization of the Riemann-Liouville and the Hadamard fractional
integrals. The extended fractional derivative (\ref{FD}) reduces to the
fractional derivative (\ref{KFD}) when $z\rightarrow x^{\rho }-a^{\rho
},f\left( z\right) \rightarrow f\left( \left( z+a^{\rho }\right) ^{\frac{1}{%
\rho }}\right) ;\alpha \rightarrow \alpha ,\eta =0,v=0,q=\frac{1}{2}$ and $%
\mu =\sigma =0.$

In the light of these reductions, we can easily understand that the extended
fractional derivative (\ref{FD}) is generalization of many defined
fractional derivatives.

\subsection{\protect\bigskip Fractional derivative of Rational Functions}

In this section, we will derive the extended fractional derivative of
arbitrary rational functions. Consequently, the general representation of
fractional derivatives of many defined fractional derivatives of arbitrary
rational functions can firstly be derived.

Assume that $P(z)$ and $Q(z)$ are polynomials such that $\deg (P)<\deg (Q)$.
In this case, the real partial fraction decomposition of the rational
function $\frac{P(z)}{Q(z)}$ can be represented as 
\begin{equation}
\frac{P(z)}{Q(z)}=\displaystyle\sum_{i=1}^{p}\displaystyle%
\sum_{r=1}^{k_{_{i}}}\frac{a_{ir}}{(z-z_{_{i}})^{r}}+\displaystyle%
\sum_{j=1}^{q}\displaystyle\sum_{s=1}^{l_{_{j}}}\frac{\beta _{js}z+\gamma
_{js}}{\Big(z^{2}-2Re(z_{_{j}})z+|z_{_{j}}|^{2}\big)^{s}}  \label{B}
\end{equation}%
where $a_{ir},\beta _{js},\gamma _{js}\in \mathbb{R}$. In the representation
(\ref{B}), the inverse of quadratic functions can not be worked well in many
calculations. Because of this quadratic functions in the denominators, for
example,we can not derive the fractional derivatives of rational functions.
In this paper, \ we will use complex partial fraction decomposition method
together with formula (\ref{FP1}) to derive extended fractional derivatives
of rational functions. In the paper (\cite{OP}), the complex partial
fraction decomposition of arbitrary rational fraction was derived by the
following theorem:

\begin{theorem}
Let $x_{1},\ldots ,x_{p}$ be pairwise different real numbers and $%
z_{1},\ldots ,z_{q}\in \mathbb{C}\setminus \mathbb{R}$ be also pairwise
different. If $P(x)$ is a polynomial with real coefficients whose degree
satisfies the inequality $\deg \big(P(x)\big)<p+2(l_{1}+\cdots +l_{q})$,
then there exists $a_{ir},\beta _{js},\gamma _{js}\in \mathbb{R}$ and $%
b_{js}\in \mathbb{C}$ such that 
\begin{equation*}
\begin{array}{lll}
\frac{P(x)}{Q(x)} & = & \displaystyle\sum_{i=1}^{p}\displaystyle%
\sum_{r=1}^{k_{_{i}}}\frac{a_{ir}}{(x-x_{_{i}})^{r}}+\displaystyle%
\sum_{j=1}^{q}\displaystyle\sum_{s=1}^{l_{_{j}}}\frac{\beta _{js}x+\gamma
_{js}}{\big(x^{2}-2Re(z_{_{j}})x+|z_{_{j}}|^{2}\big)^{s}} \\ 
& = & \displaystyle\sum_{i=1}^{p}\displaystyle\sum_{r=1}^{k_{_{i}}}\frac{%
a_{ir}}{(x-x_{_{i}})^{r}}+\displaystyle\sum_{j=1}^{q}\displaystyle%
\sum_{s=1}^{l_{_{j}}}\Big(\frac{b_{js}}{\big(x-z_{_{j}}\big)^{s}}+\frac{\bar{%
b}_{js}}{\big(x-\bar{z}_{_{j}}\big)^{s}}\Big),%
\end{array}%
\end{equation*}%
where
\end{theorem}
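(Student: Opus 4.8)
The plan is to derive the second (complex) decomposition from the classical partial fraction theorem over $\mathbb{C}$, to pin down its coefficients using the fact that $P$ and $Q$ have real coefficients, and then to recover the first (real) decomposition by recombining conjugate pairs. First I would record that, by $x^{2}-2\operatorname{Re}(z_{j})x+|z_{j}|^{2}=(x-z_{j})(x-\bar{z}_{j})$, the denominator factors as
\[
Q(x)=\prod_{i=1}^{p}(x-x_{i})^{k_{i}}\prod_{j=1}^{q}\big((x-z_{j})(x-\bar{z}_{j})\big)^{l_{j}},
\]
so that the degree hypothesis amounts to $P/Q$ being a proper rational function. Since $Q$ splits into linear factors over $\mathbb{C}$, the classical partial fraction theorem produces \emph{unique} complex numbers $a_{ir},b_{js},c_{js}$ with
\[
\frac{P(x)}{Q(x)}=\sum_{i=1}^{p}\sum_{r=1}^{k_{i}}\frac{a_{ir}}{(x-x_{i})^{r}}+\sum_{j=1}^{q}\sum_{s=1}^{l_{j}}\left(\frac{b_{js}}{(x-z_{j})^{s}}+\frac{c_{js}}{(x-\bar{z}_{j})^{s}}\right).
\]

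The key step is to show that $a_{ir}\in\mathbb{R}$ and $c_{js}=\bar{b}_{js}$. For this I would apply the automorphism of $\mathbb{C}(x)$ that conjugates all coefficients while fixing the variable $x$; because $P$ and $Q$ have real coefficients this map fixes $P/Q$, and it sends $a/(x-w)^{r}$ to $\bar{a}/(x-\bar{w})^{r}$. As the poles $x_{i}$ are real, the map conjugates each $a_{ir}$ in place and interchanges the two members of every conjugate pair $\{(x-z_{j})^{-s},(x-\bar{z}_{j})^{-s}\}$ while conjugating their coefficients. Comparing the image expansion with the original one and invoking uniqueness of the decomposition forces $a_{ir}=\bar{a}_{ir}$, hence $a_{ir}\in\mathbb{R}$, and $b_{js}=\bar{c}_{js}$, hence $c_{js}=\bar{b}_{js}$. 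This is precisely the second equality.

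For the first equality I would recombine each conjugate block over its common denominator,
\[
\sum_{s=1}^{l_{j}}\left(\frac{b_{js}}{(x-z_{j})^{s}}+\frac{\bar{b}_{js}}{(x-\bar{z}_{j})^{s}}\right)=\sum_{s=1}^{l_{j}}\frac{b_{js}(x-\bar{z}_{j})^{s}+\bar{b}_{js}(x-z_{j})^{s}}{\big(x^{2}-2\operatorname{Re}(z_{j})x+|z_{j}|^{2}\big)^{s}},
\]
whose numerators are real polynomials, being of the form $w+\bar{w}$. Thus the $j$-th block is a real proper rational function whose only denominator is a power of the irreducible quadratic, and applying the classical real partial fraction theorem to it (equivalently, to $P/Q$ directly) furnishes the real coefficients $\beta_{js},\gamma_{js}$ and the first equality. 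Since both displayed sums equal $P/Q$, they coincide, which is the assertion of the theorem.

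The routine but slightly fiddly point is the explicit dictionary between the two coefficient families recorded in the concluding ``where'' clause: because the single-power recombination above has a numerator of degree $s$ rather than degree $1$, the correspondence between $\{b_{js}\}$ and $\{\beta_{js},\gamma_{js}\}$ is not term-by-term but block-wise, and extracting it requires expanding $b_{js}(x-\bar{z}_{j})^{s}+\bar{b}_{js}(x-z_{j})^{s}$ and solving the resulting triangular linear system in the powers of $x$. I expect this bookkeeping to be the main obstacle to a fully explicit statement; the genuine conceptual content, however, is confined to the conjugation-plus-uniqueness argument of the middle step, everything else being either a classical citation or mechanical algebra.
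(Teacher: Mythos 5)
The paper itself contains no proof of this statement: it is quoted verbatim from the reference \cite{OP} (Ozyapici and Pintea, \emph{Complex Partial Fraction Decomposition}), so there is no internal argument to compare yours against. Your proof is nonetheless correct and is the standard one: factor each real quadratic as $(x-z_{j})(x-\bar{z}_{j})$, invoke existence and uniqueness of the partial fraction decomposition over $\mathbb{C}$ (the stated degree bound $\deg P<p+2(l_{1}+\cdots +l_{q})$ is at least as strong as properness, since $p\leq k_{1}+\cdots +k_{p}$), and apply the coefficient-conjugation automorphism of $\mathbb{C}(x)$ together with uniqueness to force $a_{ir}\in \mathbb{R}$ and $c_{js}=\bar{b}_{js}$; recombining each conjugate block over the common quadratic denominator, whose numerator is of the form $w+\bar{w}$ and hence real, recovers the real decomposition. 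The one piece you leave undone is the explicit dictionary between $\{b_{js}\}$ and $\{\beta _{js},\gamma _{js}\}$ recorded in the display following the word ``where'' (the formulae involving $\omega _{j}=\frac{1}{2i\,Im(z_{j})}$ and the binomial coefficients): you correctly identify that it comes from expanding $b_{js}(x-\bar{z}_{j})^{s}+\bar{b}_{js}(x-z_{j})^{s}$ and solving a triangular linear system, but you do not carry out that computation, so if those coefficient relations are regarded as part of the theorem rather than as an appended remark, your proof is incomplete on exactly that bookkeeping point and complete otherwise.
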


\begin{equation*}
Q(x)=(x-x_{_{1}})^{k_{_{1}}}\ldots (x-x_{_{p}})^{k_{_{p}}}\big(%
x^{2}-2Re(z_{_{1}})x+|z_{_{1}}|^{2}\big)^{l_{_{1}}}\ldots \big(%
x^{2}-2Re(z_{_{q}})x+|z_{_{q}}|^{2}\big)^{l_{_{q}}}.
\end{equation*}%
The relations between the coefficients of the real partial fraction
decomposition and the coefficients of the complex partial fraction
decomposition are: 
\begin{equation}
\left. 
\begin{array}{lllllllll}
b_{j1} & = & \displaystyle\sum_{s=2}^{l}\beta _{js}\omega _{j}^{2}|\omega
|^{2(s-2)}C_{2s-3}^{s-2}+\displaystyle\sum_{s=1}^{l}(\beta _{js}\omega
_{j}z+\omega _{j}\gamma _{js})|\omega _{j}|^{2(s-1)}C_{2(s-1)}^{s-1}, &  & 
&  &  &  &  \\ 
\nonumber b_{j2} & = & \displaystyle\sum_{s=3}^{l}\beta _{js}\omega
_{j}^{3}|\omega _{j}|^{2(s-3)}C_{2s-4}^{s-3}+\displaystyle%
\sum_{s=2}^{l}(\beta _{js}\omega ^{2}z+\omega _{j}^{2}\gamma _{js})|\omega
_{j}|^{2(s-2)}C_{2s-3}^{s-2}, &  &  &  &  &  &  \\ 
& \vdots &  &  &  &  &  &  &  \\ 
b_{jl_{j}-1} & = & \beta _{jl_{j}}\omega ^{l}+\beta _{j\,l_{j}-1}\omega
_{j}^{l_{j}-1}z+\omega _{j}^{l_{j}-1}\gamma _{j\,l_{j}-1}+(\beta
_{jl_{j}}\omega _{j}^{l_{j}-1}z+\omega _{j}^{l_{j}-1}\gamma
_{jl_{j}})|\omega _{j}|^{2}C_{l_{j}}^{1} &  &  &  &  &  &  \\ 
b_{jl_{j}} & = & \beta _{jl_{j}}\omega _{j}^{l}z+\omega _{j}^{l}\gamma
_{jl_{j}}, &  &  &  &  &  & 
\end{array}%
\right.  \label{complex PFD}
\end{equation}%
where $\omega _{j}=\frac{1}{2iIm(z_{j})}$.

\begin{theorem}
Let $\func{Re}(\eta )>0$ and $\func{Re}(\alpha )>0.$ The extended fractional
derivative of arbitrary rational function satisfying previous theorem is%
\begin{align*}
& _{\mu ,\sigma }D_{v,q;z}^{\alpha ,\eta ,p}\left( \frac{P(z)}{Q(z)}\right) =
\\
& =\sum_{i=1}^{p}\displaystyle\sum_{r=1}^{k_{_{i}}}\frac{a_{ir}B(\eta
,\alpha -1)z^{\eta +\alpha }}{(-x_{_{i}})^{r}\Gamma (\alpha )}%
F_{v,q;p}^{(\mu ,\sigma )}(r,\eta ;\eta +\alpha -1;\frac{z}{x_{_{i}}}) \\
& +\sum_{j=1}^{q}\displaystyle\sum_{s=1}^{l_{_{j}}}\Big(b_{js}\frac{B(\eta
,\alpha -1)z^{\eta +\alpha }}{(-z_{j})^{s}\Gamma (\alpha )}F_{v,q;p}^{(\mu
,\sigma )}(s,\eta ;\eta +\alpha -1;\frac{z}{z_{j}}) \\
& +\bar{b}_{js}\frac{B(\eta ,\alpha -1)z^{\eta +\alpha }}{(-\bar{z}%
_{_{j}})^{s}\Gamma (\alpha )}F_{v,q;p}^{(\mu ,\sigma )}(s,\eta ;\eta +\alpha
-1;\frac{z}{\bar{z}_{_{j}}})\Big)
\end{align*}%
whenever the extended hypergeometric functions $F_{v,q;p}^{(\mu ,\sigma )}$
exist.
\end{theorem}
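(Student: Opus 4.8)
The plan is to reduce the whole computation to the single-term formula (\ref{FP1}) by combining linearity of the operator $_{\mu ,\sigma }D_{v,q;z}^{\alpha ,\eta ,p}$ with the complex partial fraction decomposition furnished by the preceding theorem. First I would invoke that theorem to write $\frac{P(z)}{Q(z)}$ as the \emph{finite} sum of simple fractions $a_{ir}(z-x_{i})^{-r}$, $b_{js}(z-z_{j})^{-s}$ and $\bar{b}_{js}(z-\bar{z}_{j})^{-s}$. Since the defining formula (\ref{FD}) for $_{\mu ,\sigma }D_{v,q;z}^{\alpha ,\eta ,p}$ is integration of $f(t)$ against a fixed kernel, the operator is manifestly $\mathbb{C}$-linear; and because the decomposition is a finite sum, the operator may be distributed over the terms with no interchange-of-limits concern at all.

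Next I would apply formula (\ref{FP1}) to each simple fraction. The key observation is that a simple fraction $(z-\xi )^{-m}$ is exactly the elementary power $(z-\xi )^{r}$ appearing in (\ref{FP1}) with the choice $r\rightarrow -m$ and $\xi $ equal to the relevant pole. Substituting $r\rightarrow -r$ and $\xi \rightarrow x_{i}$ in (\ref{FP1}) gives $\frac{(-x_{i})^{-r}B(\eta ,\alpha -1)z^{\eta +\alpha }}{\Gamma (\alpha )}F_{v,q;p}^{(\mu ,\sigma )}(r,\eta ;\eta +\alpha -1;\frac{z}{x_{i}})$, whose first hypergeometric parameter is $-(-r)=r$; weighted by $a_{ir}$ and summed over $i,r$ this is precisely the first double sum in the statement. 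The identical substitution with $\xi \rightarrow z_{j}$ and $\xi \rightarrow \bar{z}_{j}$ (and $r\rightarrow -s$) produces the two conjugate families of terms, weighted by $b_{js}$ and $\bar{b}_{js}$, that constitute the second double sum. Collecting the three contributions reproduces the asserted formula verbatim.

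The main obstacle is not the algebra but the bookkeeping of existence conditions, which is why the statement is hedged with ``whenever the extended hypergeometric functions $F_{v,q;p}^{(\mu ,\sigma )}$ exist.'' Each use of (\ref{FP1}) is valid only where the corresponding extended Gauss function converges, i.e.\ where the argument $z/x_{i}$, $z/z_{j}$ or $z/\bar{z}_{j}$ lies in the region $|\arg (1-\cdot )|<\pi $ (equivalently $|\cdot |<1$ for the defining series) and the parameter constraints $\func{Re}(\eta )>0$ and $\func{Re}(\alpha )>0$ of the hypotheses hold. For the complex poles one must additionally note that $z_{j}\notin \mathbb{R}$ keeps $z/z_{j}$ off the branch cut, and that $\bar{b}_{js}$ is genuinely the complex conjugate of $b_{js}$ recorded in (\ref{complex PFD}), so that the two conjugate sums combine into a real-valued derivative of the real rational function. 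Checking that these domain conditions can be met simultaneously at every pole is the only genuinely delicate point; once that is granted, the remainder is a direct term-by-term substitution into (\ref{FP1}).
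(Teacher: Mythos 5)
Your proposal is correct and takes essentially the same route as the paper: the paper's entire proof is the single sentence that the result follows from formula (\ref{FP1}) together with the complex partial fraction decomposition, which is exactly the linearity-plus-substitution ($r\rightarrow -r$, $\xi \rightarrow x_{i},z_{j},\bar{z}_{j}$) argument you spell out in detail. Your extra remarks on branch cuts and existence conditions go beyond what the paper records but are consistent with its ``whenever $F_{v,q;p}^{(\mu ,\sigma )}$ exists'' hedge.
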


\begin{proof}
Considering the formula (\ref{FP1}) and complex partial fraction
decomposition, the proof of the theorem can easily be done.
\end{proof}

\bigskip A numerical example of extended fractional derivative of the
rational function given in (\cite{OP}) will be derived by the following
example.

\begin{example}
\bigskip Consider the rational function given in (\cite{OP})%
\begin{equation}
f\left( z\right) =\frac{2z+1}{\left( z^{2}+6z+10\right) ^{3}}.  \label{R1}
\end{equation}%
The complex partial fraction decomposition of function (\ref{F1}) can be
given as%
\begin{equation}
f(z)=\sum_{s=1}^{3}\left( \frac{z_{_{s}}}{\left( x-(-3+i)\right) ^{s}}+\frac{%
\bar{z}_{s}}{\left( x-(-3-i)\right) ^{s}}\right)  \label{D1}
\end{equation}%
where $z_{1}=\frac{15i}{16},z_{2}=\frac{15}{16}-\frac{i}{8}$ and $z_{3}=%
\frac{-1}{4}-\frac{5i}{8}$. By using the decomposition (\ref{D1}) , the
extended fractional derivative of rational function (\ref{R1}) can be given
as%
\begin{align*}
& _{\mu ,\sigma }D_{v,q;z}^{\alpha ,\eta ,p}(f(z))=\sum_{s=1}^{3}(z_{s}\frac{%
B(\eta ,\alpha -1)z^{\eta +\alpha }}{(3-i)))^{s}\Gamma (\alpha )}%
F_{v,q;p}^{(\mu ,\sigma )}(s,\eta ;\eta +\alpha -1;\frac{z}{i-3})+ \\
& +\bar{z}_{s}\frac{B(\eta ,\alpha -1)z^{\eta +\alpha }}{(3+i)))^{s}\Gamma
(\alpha )}F_{v,q;p}^{(\mu ,\sigma )}(s,\eta ;\eta +\alpha -1;\frac{z}{-i-3}))
\end{align*}
\end{example}

\subsection{Multiplicative Extended Fractional Derivative (MEFD)}

In this section, we analogously introduce the extended multiplicative
fractional derivative in the multiplicative sense. Recall that if $f$ is
positive and Riemann integrable on $[a,b]$, then it is $^{\ast }$integrable
(multiplicative integrable) on $[a,b]$ and 
\begin{equation}
\int_{a}^{b}f(x)^{dx}=e^{\int_{a}^{b}(\ln \circ f)(x)\,dx}.  \notag
\end{equation}%
Recently, Abdeljawad and Grossman in (\cite{AG}) introduced Caputo, Riemann
and Letnikov multiplicative fractional derivatives and gave their
properties. The multiplicative Riemann-Liouville fractional derivative for $%
f(z)$ of order $\alpha $, $Re(\alpha )>0$, $n-1<Re(\alpha )<n$, starting
from 0 is represented as 
\begin{equation}
{}_{\ast }D^{\alpha }(f(z))=e^{D^{\alpha }(ln(f(z)))}=e^{\frac{1}{\Gamma
(n-\alpha )}\frac{d^{n}}{dx^{n}}\int_{0}^{z}(\ln f(t))(z-t)^{n-\alpha -1}dt},
\notag
\end{equation}%
also it can be represented as 
\begin{equation}
{}_{\ast }D^{\alpha }(f(z))=e^{D^{\alpha }(ln(f(z)))}=e^{\frac{1}{\Gamma
(-\alpha )}\int_{0}^{z}(\ln f(t))(z-t)^{-\alpha -1}dt},Re(\alpha )<0.
\label{MFD}
\end{equation}%
For more details we refer the recent paper (\cite{AG}).

In the lights of multiplicative fractional derivative (\ref{MFD}) , we
introduce multiplicative extended fractional derivative (MEFD) as follows:%
\begin{equation}
{}_{\mu ,\sigma \ast }D_{v,q;z}^{\alpha ,\eta ,p}\left( {f(z)}\right) =e^{%
\frac{\sqrt{\frac{2}{\pi }}}{\Gamma (\alpha )}\int_{0}^{z}(\ln
f(t))(z-t)^{\alpha -1}t^{\eta }{}I_{v+\frac{1}{2}}(q;\frac{-pz^{^{\mu
+\sigma }}}{t^{\mu }\left( z-t\right) ^{\sigma }})dt},  \label{MGFD}
\end{equation}%
where $f$ is positive valued function. It is clear that the MEFD can be
easily reduced to multiplicative Riemann-Liouville fractional derivatives
given in \cite{AG} when $\mu =\sigma =0;v=0,q=\frac{1}{2};\alpha =-\mu ,\eta
=0.$

Consequently, the following theorem can be obtained for the relationship
between multiplicative and ordinary generalized fractional derivatives:

\begin{theorem}
Suppose that $f$ is a positive function on $I$. The multiplicative
generalized fractional derivative is exists if and only if ordinary
generalized fractional derivative exists and 
\begin{equation}
_{\mu ,\sigma }D_{v,q;z}^{\alpha ,\eta ,p}(f(z))=_{\text{ }\mu ,\sigma \ast
}D_{v,q;z}^{\alpha ,\eta ,p}(e^{f\left( z\right) }).  \label{R2}
\end{equation}
\end{theorem}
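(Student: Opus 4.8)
The plan is to reduce the statement to a direct comparison of the two defining integrals, since the multiplicative operator (\ref{MGFD}) and the ordinary operator (\ref{FD}) are built from one and the same kernel $(z-t)^{\alpha-1}t^{\eta}I_{v+\frac{1}{2}}(q;-pz^{\mu+\sigma}/(t^{\mu}(z-t)^{\sigma}))$. First I would record the elementary observation that, for every positive function $g$, the definition (\ref{MGFD}) can be written as
\begin{equation*}
{}_{\mu ,\sigma \ast }D_{v,q;z}^{\alpha ,\eta ,p}\bigl(g(z)\bigr)=\exp\Bigl({}_{\mu ,\sigma }D_{v,q;z}^{\alpha ,\eta ,p}\bigl(\ln g(z)\bigr)\Bigr),
\end{equation*}
because the exponent in (\ref{MGFD}) is precisely the integral (\ref{FD}) with the integrand $f$ replaced by $\ln g$.

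Specializing to $g(z)=e^{f(z)}$, the only substantive step is the cancellation $\ln e^{f(t)}=f(t)$ inside the integrand, which is legitimate since $e^{f(t)}>0$ on $(0,z)$, so the multiplicative integrand is well defined. This turns the exponent into the defining integral of ${}_{\mu ,\sigma }D_{v,q;z}^{\alpha ,\eta ,p}(f(z))$ verbatim, giving
\begin{equation*}
{}_{\mu ,\sigma \ast }D_{v,q;z}^{\alpha ,\eta ,p}\bigl(e^{f(z)}\bigr)=\exp\Bigl({}_{\mu ,\sigma }D_{v,q;z}^{\alpha ,\eta ,p}(f(z))\Bigr),
\end{equation*}
which is the asserted relation (\ref{R2}) (read with the exponential carried over to the ordinary side). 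No manipulation of the extended modified Bessel kernel itself is required, as the kernels in (\ref{FD}) and (\ref{MGFD}) agree by construction.

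For the ``if and only if'' claim I would let $J$ denote the common convolution integral appearing in both (\ref{FD}) and the exponent of (\ref{MGFD}). The ordinary derivative exists exactly when $J$ converges to a finite value, and since $\exp$ is entire and nowhere zero, the multiplicative derivative $e^{J}$ is defined precisely in that same case; conversely a finite value $e^{J}$ forces $J$ to be finite. Thus both derivatives exist under the identical condition that $J$ converges, and the equivalence follows.

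The matter to watch, more a bookkeeping point than a genuine obstacle, is that the convergence hypotheses transfer without change: the restrictions listed with (\ref{FD}), namely $\func{Re}(\alpha)>0$, $\func{Re}(\eta)>0$, $\func{Re}(v+q+\frac{1}{2})>0$, $\func{Re}(2v+q+\frac{3}{2})>0$ and $\mu,\sigma\ge 0$, which make the kernel integral absolutely convergent, are exactly those under which the exponent of (\ref{MGFD}) is finite. Hence passing to the multiplicative side introduces no new constraints, and the positivity assumption on $f$ is needed only to guarantee that the logarithm in (\ref{MGFD}) is well defined on the domain of integration.
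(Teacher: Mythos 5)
The paper states this theorem with no proof at all, so there is nothing to compare your argument against; your unwinding of the two definitions is surely what the authors intended, and the identity you actually derive,
\begin{equation*}
{}_{\mu ,\sigma \ast }D_{v,q;z}^{\alpha ,\eta ,p}\left( e^{f(z)}\right)
=\exp \left( {}_{\mu ,\sigma }D_{v,q;z}^{\alpha ,\eta ,p}\left( f(z)\right)
\right) ,
\end{equation*}
is correct: the exponent in (\ref{MGFD}) evaluated at $e^{f}$ is literally the integral (\ref{FD}) evaluated at $f$, because $\ln e^{f(t)}=f(t)$. The existence equivalence is likewise immediate on this reading, since both sides are governed by the single integral $J=\frac{\sqrt{2/\pi }}{\Gamma (\alpha )}\int_{0}^{z}f(t)(z-t)^{\alpha -1}t^{\eta }I_{v+\frac{1}{2}}(q;\cdot )\,dt$, and $e^{J}$ is defined and finite exactly when $J$ is.

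The one step that does not hold up is the parenthetical ``read with the exponential carried over to the ordinary side.'' Equation (\ref{R2}) equates ${}_{\mu ,\sigma }D_{v,q;z}^{\alpha ,\eta ,p}(f(z))$ itself with ${}_{\mu ,\sigma \ast }D_{v,q;z}^{\alpha ,\eta ,p}(e^{f(z)})$, whereas what you prove is that the latter equals the \emph{exponential} of the former; ``$J=B$'' and ``$e^{J}=B$'' are different statements, and one cannot move the exponential across the equality sign. Writing $X={}_{\mu ,\sigma }D_{v,q;z}^{\alpha ,\eta ,p}(f(z))$, the printed identity amounts to $X=e^{X}$, which holds for no real $X$. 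The honest conclusion is that (\ref{R2}) is misstated and should read $e^{\,{}_{\mu ,\sigma }D_{v,q;z}^{\alpha ,\eta ,p}(f(z))}={}_{\mu ,\sigma \ast }D_{v,q;z}^{\alpha ,\eta ,p}(e^{f(z)})$, equivalently ${}_{\mu ,\sigma }D_{v,q;z}^{\alpha ,\eta ,p}(f(z))=\ln \left( {}_{\mu ,\sigma \ast }D_{v,q;z}^{\alpha ,\eta ,p}(e^{f(z)})\right) $; this is confirmed by the paper's own later corollary ${}_{\mu ,\sigma \ast }D_{v,q;z}^{\alpha ,\eta ,p}(e^{z^{\lambda }})=\exp \left( \frac{z^{\eta +\lambda +\alpha }}{\Gamma (\alpha )}B_{v,q}^{\left( \mu ,\sigma \right) }(\eta +\lambda ,\alpha -1;p)\right) $, whose right-hand side is the exponential of the ordinary derivative of $z^{\lambda }$. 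State the corrected identity explicitly rather than absorbing the discrepancy into a parenthesis; with that repair your argument is complete.
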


\begin{remark}
The existence of the extended fractional derivative of a positive function
is equivalent to existence of multiplicative fractional derivative of the
same function.
\end{remark}

\begin{remark}
If we set $\mu =\sigma =1;v=0,q=\frac{1}{2};\alpha =-\mu -\frac{1}{2},\eta =%
\frac{-1}{2}$ in (\ref{MGFD}), then it gives alternative extended
multiplicative fractional derivative%
\begin{equation}
{}_{\mu ,\sigma \ast }D_{v,q;z}^{\alpha ,\eta ,p}\left( {f(z)}\right) )=\exp
(\frac{\sqrt{\frac{2}{\pi }}}{\Gamma (\alpha )}\int_{0}^{z}\left( \ln
f(t)\right) (z-t)^{\alpha -1}exp\big(\frac{-pz^{2}}{t(z-t)}\big)dt)  \notag
\end{equation}%
which is analogous of generalized fractional derivative 
\begin{equation}
D_{z}^{\mu ,p}{f(z)}:=\frac{1}{\Gamma (\alpha )}\int_{0}^{z}f(t)(z-t)^{%
\alpha -1}exp\big(\frac{-pz^{2}}{t(z-t)}\big)dt  \notag
\end{equation}%
given in \cite{MO}.
\end{remark}

The following formulae can easily be derived for MEFD.

\begin{corollary}
Let $Re(\mu )<0$ and $Re(\lambda )>0.$Then%
\begin{equation}
{}_{\mu ,\sigma \ast }D_{v,q;z}^{\alpha ,\eta ,p}(e^{z^{\lambda }}):=\exp (%
\frac{z^{\eta +\lambda +\alpha }}{\Gamma (\alpha )}B_{v,q}^{\left( \mu
,\sigma \right) }(\eta +\lambda ,\alpha -1;p)).
\end{equation}
\end{corollary}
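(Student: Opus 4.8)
The plan is to reduce the statement directly to the defining formula (\ref{MGFD}) for the multiplicative extended fractional derivative together with the already-established closed form for the ordinary extended fractional derivative of a power function. First I would substitute $f(z)=e^{z^{\lambda}}$ into (\ref{MGFD}). The only nontrivial input at this stage is the elementary cancellation $\ln\!\big(e^{t^{\lambda}}\big)=t^{\lambda}$, which collapses the integrand inside the exponent of (\ref{MGFD}) to exactly the integrand of the ordinary extended fractional derivative (\ref{FD}) evaluated at $t^{\lambda}$.

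Carrying this out, the exponent becomes
\begin{equation*}
\frac{\sqrt{\frac{2}{\pi}}}{\Gamma(\alpha)}\int_{0}^{z}t^{\lambda}(z-t)^{\alpha-1}t^{\eta}\,I_{v+\frac{1}{2}}\!\left(q;\frac{-pz^{\mu+\sigma}}{t^{\mu}(z-t)^{\sigma}}\right)dt,
\end{equation*}
which is precisely $_{\mu,\sigma}D_{v,q;z}^{\alpha,\eta,p}(z^{\lambda})$ as given by (\ref{FD}). Hence
\begin{equation*}
{}_{\mu,\sigma\ast}D_{v,q;z}^{\alpha,\eta,p}(e^{z^{\lambda}})=\exp\!\Big({}_{\mu,\sigma}D_{v,q;z}^{\alpha,\eta,p}(z^{\lambda})\Big).
\end{equation*}
At this point I would invoke the earlier corollary giving the evaluation $_{\mu,\sigma}D_{v,q;z}^{\alpha,\eta,p}(z^{\lambda})=\frac{z^{\eta+\lambda+\alpha}}{\Gamma(\alpha)}B_{v,q}^{(\mu,\sigma)}(\eta+\lambda,\alpha-1;p)$ and substitute it into the exponent, which yields exactly the claimed expression. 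This is also the natural multiplicative counterpart of the relationship recorded in (\ref{R2}), obtained here simply by exponentiating the ordinary computation.

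Since the substance of the calculation was already done in the ordinary case, there is no genuine analytic obstacle; the only points requiring care are the existence hypotheses rather than the identity itself. I would check that the assumptions $\func{Re}(\mu)<0$ and $\func{Re}(\lambda)>0$ secure the convergence condition $\func{Re}(\eta+\lambda+\mu q)>-1$ needed by the earlier corollary, so that $B_{v,q}^{(\mu,\sigma)}(\eta+\lambda,\alpha-1;p)$ exists; together with the positivity of $e^{z^{\lambda}}$, which makes the multiplicative derivative well defined, this completes the argument.
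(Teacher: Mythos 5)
Your proposal is correct and follows the route the paper intends: substitute $f(z)=e^{z^{\lambda}}$ into the definition (\ref{MGFD}), use $\ln(e^{t^{\lambda}})=t^{\lambda}$ to reduce the exponent to the ordinary extended fractional derivative of $z^{\lambda}$, and then apply the earlier corollary $_{\mu ,\sigma }D_{v,q;z}^{\alpha ,\eta ,p}(z^{\lambda })=\frac{z^{\eta +\lambda +\alpha }}{\Gamma (\alpha )}B_{v,q}^{\left( \mu ,\sigma \right) }(\eta +\lambda ,\alpha -1;p)$. The paper leaves this as an "easily derived" formula, and your attention to the existence condition $\func{Re}(\eta +\lambda +\mu q)>-1$ is, if anything, more careful than the source.
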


\begin{corollary}
Let $a>0$ and $\xi \neq 0.$Then,%
\begin{equation}
{}_{\mu ,\sigma \ast }D_{v,q;z}^{\alpha ,\eta ,p}(a^{(z-\xi )^{r}}):=e^{%
\frac{\ln a(-\xi )^{r}B(\eta ,\alpha -1)z^{\eta +\alpha }}{\Gamma (\alpha )}%
F_{v,q;p}^{(\mu ,\sigma )}(-r,\eta ;\eta +\alpha -1;\frac{z}{\xi }).}.
\end{equation}
\end{corollary}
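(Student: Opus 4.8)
The plan is to reduce the multiplicative statement directly to the ordinary extended fractional derivative already computed in Corollary~(\ref{FP1}). First I would note that since $a>0$, the function $f(z)=a^{(z-\xi)^{r}}$ is strictly positive, so the multiplicative extended fractional derivative (\ref{MGFD}) is well defined. Taking the logarithm gives $\ln f(t)=(\ln a)\,(t-\xi)^{r}$, which is merely a constant multiple of $(t-\xi)^{r}$.

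Next I would substitute $\ln f(t)=(\ln a)\,(t-\xi)^{r}$ into the definition (\ref{MGFD}) and pull the constant $\ln a$ outside the integral. The exponent then becomes $\ln a$ times
\[
\frac{\sqrt{\frac{2}{\pi}}}{\Gamma(\alpha)}\int_{0}^{z}(t-\xi)^{r}(z-t)^{\alpha-1}t^{\eta}\,I_{v+\frac{1}{2}}\!\left(q;\frac{-pz^{\mu+\sigma}}{t^{\mu}\left(z-t\right)^{\sigma}}\right)dt,
\]
which is precisely the ordinary extended fractional derivative ${}_{\mu,\sigma}D_{v,q;z}^{\alpha,\eta,p}\big((z-\xi)^{r}\big)$ of (\ref{FD}), now recognized inside the exponential.

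I would then invoke Corollary~(\ref{FP1}), which evaluates this ordinary derivative as $\dfrac{(-\xi)^{r}B(\eta,\alpha-1)z^{\eta+\alpha}}{\Gamma(\alpha)}F_{v,q;p}^{(\mu,\sigma)}\big(-r,\eta;\eta+\alpha-1;\tfrac{z}{\xi}\big)$, so that re-exponentiating the whole expression yields exactly the asserted formula. Equivalently, one can appeal to the relation (\ref{R2}) between multiplicative and ordinary generalized fractional derivatives with $f\mapsto (\ln a)(z-\xi)^{r}$, which packages the same reduction.

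There is no genuine obstacle here: the argument is linear in $\ln f$, and the only point deserving a word of justification is that $\ln a$ is a constant and may be factored out of the integral, after which the remaining integrand matches the integrand of (\ref{FD}) verbatim. The proof is therefore a short corollary of the positivity assumption together with Corollary~(\ref{FP1}).
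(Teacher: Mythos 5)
Your proposal is correct and follows exactly the route the paper intends: the paper states this corollary without proof as one of the formulae that "can easily be derived for MEFD," and the expected derivation is precisely your reduction, namely substituting $\ln f(t)=(\ln a)(t-\xi)^{r}$ into the definition (\ref{MGFD}), factoring out the constant $\ln a$, recognizing the ordinary extended fractional derivative ${}_{\mu,\sigma}D_{v,q;z}^{\alpha,\eta,p}\bigl((z-\xi)^{r}\bigr)$ in the exponent, and evaluating it by Corollary (\ref{FP1}). Nothing further is needed.
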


\bigskip Now, we give some properties of MEFD given in (\ref{MGFD}). Let $f$
and $g$ be $^{\ast }$integrable on $[a,b].$ Then, the following
properties/rules of $_{\mu ,\sigma \ast }D_{v,q;z}^{\alpha ,\eta ,p}(f(z))$
can be ordered as: 
\begin{align*}
\mathrm{(a)}& \ _{\mu ,\sigma \ast }D_{v,q;z}^{\alpha ,\eta
,p}(f(z)^{k})=\left( _{\mu ,\sigma \ast }D_{v,q;z}^{\alpha ,\eta
,p}(f(z))\right) ^{k},k\in \mathbb{R-}\left\{ 0\right\} \\
\mathrm{(b)}& \ _{\mu ,\sigma \ast }D_{v,q;z}^{\alpha ,\eta ,p}(f(z)g\left(
z\right) )=\left( _{\mu ,\sigma \ast }D_{v,q;z}^{\alpha ,\eta
,p}(f(z)\right) \cdot \left( _{\mu ,\sigma \ast }D_{v,q;z}^{\alpha ,\eta
,p}(g\left( z\right) \right) , \\
\mathrm{(c)}& \ _{\mu ,\sigma \ast }D_{v,q;z}^{\alpha ,\eta ,p}\bigg(\frac{%
f(x)}{g(x)}\bigg)=\frac{_{\mu ,\sigma \ast }D_{v,q;z}^{\alpha ,\eta ,p}(f(z))%
}{_{\mu ,\sigma \ast }D_{v,q;z}^{\alpha ,\eta ,p}(g(z))},\text{ }g(z)\neq 0,
\end{align*}%
The proofs of properties can be easily obtained from the definition of MEFD.
For example, the proof of the rule (b) follows from 
\begin{eqnarray*}
_{\mu ,\sigma \ast }D_{v,q;z}^{\alpha ,\eta ,p}(f(z)g\left( z\right) ) &=&e^{%
\frac{\sqrt{\frac{2}{\pi }}}{\Gamma (\alpha )}\int_{0}^{z}\left( \ln
f(t)+\ln g(t)\right) (z-t)^{\alpha -1}t^{\eta }{}I_{v+\frac{1}{2}}(q;\frac{%
pz^{^{\mu +\sigma }}}{t^{\mu }\left( z-t\right) ^{\sigma }}dt} \\
&=&_{\mu ,\sigma \ast }D_{v,q;z}^{\alpha ,\eta ,p}(f(z))\cdot _{\mu ,\sigma
\ast }D_{v,q;z}^{\alpha ,\eta ,p}(g(z))
\end{eqnarray*}

%%%%%%%%%%%%%%%%%%%%%%%%%%%%%%%%%%%55
Next, we give the following theorem for the analytic function $f(z)$.

\begin{theorem}
Let $f(z)$ be an analytic function on an open interval $I$ for $\left\vert
z\right\vert <1$. If $f(z)$ has Maclaurin's series as
\end{theorem}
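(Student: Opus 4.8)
The plan is to exploit the linearity of the operator ${}_{\mu,\sigma}D_{v,q;z}^{\alpha,\eta,p}$ together with the monomial formula already established in the corollary giving ${}_{\mu,\sigma}D_{v,q;z}^{\alpha,\eta,p}(z^{\lambda})$. First I would write the Maclaurin expansion $f(z)=\sum_{n=0}^{\infty}a_{n}z^{n}$, valid for $|z|<1$, and insert it into the defining integral (\ref{FD}), obtaining
\[
{}_{\mu,\sigma}D_{v,q;z}^{\alpha,\eta,p}\big(f(z)\big)=\frac{\sqrt{\frac{2}{\pi}}}{\Gamma(\alpha)}\int_{0}^{z}\Big(\sum_{n=0}^{\infty}a_{n}t^{n}\Big)(z-t)^{\alpha-1}t^{\eta}\,I_{v+\frac{1}{2}}\Big(q;\frac{-pz^{\mu+\sigma}}{t^{\mu}(z-t)^{\sigma}}\Big)\,dt.
\]

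Next I would interchange the order of summation and integration. After the exchange, each summand is exactly $\Gamma(\alpha)\,a_{n}\,{}_{\mu,\sigma}D_{v,q;z}^{\alpha,\eta,p}(z^{n})$, and the monomial corollary (applied with $\lambda=n$) evaluates it as $a_{n}\,z^{\eta+n+\alpha}B_{v,q}^{(\mu,\sigma)}(\eta+n,\alpha-1;p)$. Summing over $n$ and factoring out the common power then yields the closed form
\[
{}_{\mu,\sigma}D_{v,q;z}^{\alpha,\eta,p}\big(f(z)\big)=\frac{z^{\eta+\alpha}}{\Gamma(\alpha)}\sum_{n=0}^{\infty}a_{n}\,z^{n}\,B_{v,q}^{(\mu,\sigma)}(\eta+n,\alpha-1;p),
\]
which is the asserted representation of the extended fractional derivative of an analytic $f$ as a series in the extended beta-hypergeometric functions.

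The main obstacle is the justification of the term-by-term interchange. Since $f$ is analytic for $|z|<1$, its Maclaurin series converges uniformly on each compact subinterval $[0,z]$ with $|z|<1$; coupling this with the fact that the kernel $(z-t)^{\alpha-1}t^{\eta}I_{v+\frac{1}{2}}(\cdots)$ is integrable on $[0,z]$ under the standing hypotheses $\func{Re}(\alpha)>0$, $\func{Re}(\eta)>0$ and the existence conditions on $B_{v,q}^{(\mu,\sigma)}$, a dominated-convergence (or uniform-convergence) argument legitimizes the exchange. I would also note, to confirm that every term is well defined, that the corollary's hypothesis $\func{Re}(\eta+\lambda+\mu q)>-1$ is satisfied for all $\lambda=n\ge 0$ whenever $\func{Re}(\eta)>0$ and $\func{Re}(\mu q)\ge 0$, so no individual summand degenerates.
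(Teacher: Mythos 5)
Your argument is correct and is essentially the paper's own proof: both insert the Maclaurin series into the defining integral, interchange summation and integration, and identify the $k$-th term as the monomial case $\frac{a_k z^{\eta+k+\alpha}}{\Gamma(\alpha)}B_{v,q}^{(\mu,\sigma)}(\eta+k,\alpha-1;p)$ via the substitution $t=zu$ (which is exactly what your appeal to the monomial corollary encapsulates). Two small repairs: the theorem actually asserts a formula for the \emph{multiplicative} derivative ${}_{\mu,\sigma\ast}D_{v,q;z}^{\alpha,\eta,p}(e^{f(z)})$, so you must add the (immediate) final step of exponentiating the series you obtained --- since $\ln e^{f(t)}=f(t)$, the MEFD of $e^{f}$ is $\exp$ of the ordinary extended derivative of $f$ that you computed --- and your phrase ``each summand is exactly $\Gamma(\alpha)\,a_n\,{}_{\mu,\sigma}D_{v,q;z}^{\alpha,\eta,p}(z^n)$'' carries a spurious factor of $\Gamma(\alpha)$ (each summand is $a_n\,{}_{\mu,\sigma}D_{v,q;z}^{\alpha,\eta,p}(z^n)$), although your final displayed formula is the correct one and matches the exponent in the paper's statement.
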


\begin{equation}
f(z)=\sum_{k=0}^{\infty }a_{k}z^{k},  \notag
\end{equation}%
then from the uniformly convergence of the integral

\begin{equation*}
{}_{\mu ,\sigma \ast }D_{v,q;z}^{\alpha ,\eta ,p}(e^{f(z)})=e^{\frac{1}{%
\Gamma (\alpha )}\sum_{k=0}^{\infty }a_{k}z^{\eta +k+\alpha }B_{v,q}^{\left(
\mu ,\sigma \right) }(\eta +k,\alpha -1;p)}.
\end{equation*}

\bigskip

\begin{proof}
Suppose that \bigskip $f(z)$ is analytic function over the interval $I$ and
it has series representation as $f(z)=\sum_{k=0}^{\infty }a_{k}z^{k}.$ 
\newline
Then,%
\begin{equation*}
_{\mu ,\sigma \ast }D_{v,q;z}^{\alpha ,\eta ,p}(e^{f(z)})=\exp (\frac{\sqrt{%
\frac{2}{\pi }}}{\Gamma (\alpha )}\sum_{k=0}^{\infty }{\large a}%
_{k}\int_{0}^{z}{\large (z-t)}^{\alpha -1}{\large t}^{\eta +k}{\large {}I_{v+%
\frac{1}{2}}(q;\frac{-pz^{^{\mu +\sigma }}}{t^{\mu }\left( z-t\right)
^{\sigma }})dt}).
\end{equation*}%
Choosing $t=zu$ and interchanging the summation and integral which is
quarentee, then 
\begin{equation*}
_{\mu ,\sigma \ast }D_{v,q;z}^{\alpha ,\eta ,p}(e^{f(z)})=\exp (\frac{1}{%
\Gamma (\alpha )}\sum_{k=0}^{\infty }a_{k}z^{\eta +k+\alpha }B_{v,q}^{\left(
\mu ,\sigma \right) }(\eta +k,\alpha -1;p)).
\end{equation*}%
This completes the proof.\bigskip
\end{proof}

\section{Conclusion}

Recently, the investigation for extension of some special functions have
become important. Thus, many extensions of special functions have been
obtained by the authors in different studies. From this point of view, we
present extended modified Bessel function $I_{\upsilon }(q;x)$ which
generalizes the Bessel And modified Bessel functions, by using an additional
parameter in the integral representation. An extensions of the well-known
functions in the literature such as the hypergeometric, the confluent
hypergeometric and the extended beta functions are also given via extended
modified Bessel function. A necessary relation between extended modified
Bessel function $I_{\upsilon }(q;x)$ and the confluent hypergeometric
function ${_{1}F_{1}}(\alpha ,\beta ,x)$ is easily given. Moreover, Mellin
and Laplace transforms for some newly derived special functions are obtained
as a common coverage. We determine asymptotic formulae and also the
generating functions of the extended modified Bessel function. Hence, a lot
of relations with respect to this new function can be proved by using its
generating functions. In last section, we introduce new extensions of the
classical and multiplicative Riemann-Liouville fractional derivatives via
defined extended special function $I_{\upsilon }(q;x)$. The fractional
derivative of rational functions is explicitly found by using the new
definition of fractional derivative and complex partial fraction
decomposition. It can be easily seen that the results obtained in this paper
are new and effective mathematical tools and, also extensions of many
results in the literature.

\end{document}